\numberwithin{equation}{section}
\theoremstyle{plain}
\newtheorem{theorem}{Theorem}[section]
\newtheorem{lemma}[theorem]{Lemma}
\theoremstyle{definition}
\newtheorem{example}[theorem]{Example}
\newtheorem{remark}[theorem]{Remark}
\newtheorem{?}[theorem]{Problem}
\newtheoremstyle{named}{}{}{\itshape}{}{\bfseries}{.}{.5em}{#1\thmnote{ #3}}
\theoremstyle{named}
\newcommand{\f}[1]{\ifthenelse{\equal{#1}{1}}{(q;q)_\infty}{(q^{#1};q^{#1})_{\infty}}}
\def\cD{\mathscr{D}}
\def\cP{\mathscr{P}}
\def\cA{\mathscr{A}}
\def\cB{\mathscr{B}}
\def\cE{\mathscr{E}}
\def\cRR{\mathscr{R}}
\def\cOD{\mathscr{D}^{\mathrm{o}}}
\def\cED{\mathscr{D}^{\mathrm{e}}}
\def\cF{\mathscr{F}}
\def\bN{\mathbb{N}}
\def\sol{\mathrm{sol}}
\def\I{\mathrm{I}}
\def\II{\mathrm{II}}
\def\III{\mathrm{III}}
\def\IV{\mathrm{IV}}
\def\V{\mathrm{V}}
\def\VI{\mathrm{VI}}
\def\VII{\mathrm{VII}}
\def\f{\mathrm{f}}
\def\wt{\mathrm{wt}}
\def\sfi{\mathrm{sfi}}
\def\L{\mathrm{L}}
\def\R{\mathrm{R}}
\def\cM{\mathscr{M}}
\def\ri{\rightarrow}
\def\ga{\gamma}
\def\xri{\xrightarrow}
\def\la{\lambda}
\def\al{\alpha}
\def\ep{\epsilon}
\def\hla{\hat{\la}}
\def\hmu{\hat{\mu}}
\begin{document}
\title[Sequences of odd length in strict partitions IV]{Sequences of odd length in strict partitions IV: the combinatorics of parameterized Rogers-Ramanujan type identities}


\author[H. Li]{Haijun Li}
\address[Haijun Li]{College of Mathematics and Statistics, Chongqing University, Chongqing 401331, P.R. China}
\email{lihaijun@cqu.edu.cn}

\date{\today}

\begin{abstract}
In the first three papers, we conducted a series of discussions on the statistics of strict partitions and Rogers-Ramanujan partitions, specifically the sequences of odd length (denoted as ``$\sol$'') and its extensions. We established bijections for some Rogers-Ramanujan type identities. This paper will continue that series of work, and first we will use the bijective method to re-establish several parameterized Rogers-Ramanujan type identities, which appeared in the recent work of  Hao-Kuai-Xia and Li-Wang. Moreover, we focus on the work of Chen-Yin and parameterize their main results, where the ``$\sol$'' has evolved.

\end{abstract}

\keywords{Integer partition, bijective combinatorics, Rogers-Ramanujan type identity.
\newline \indent 2020 {\it Mathematics Subject Classification}. Primary 11P84, 05A17, 05A19; Secondary 05A15, 05A19, 05E18, 11A55, 11B68.}

\maketitle

\section{Introduction}\label{sec:intro}

We begin with the notation of q-series. Let $q$ denote a complex number with $|q|<1$. Here and in what follows, we adopt the standard $q$-series notation \cite{GR90}. We let
\begin{align*}
&(a; q)_n=(1-a)(1-aq)\cdots (1-aq^{n-1}),\text{ for }n\geq 1,\ (a; q)_0=1,\\
&(a; q)_{\infty}=\lim_{n\ri\infty}(a; q)_n,\text{ and }(a_1, ..., a_m; q)_n=(a_1; q)_n\cdots (a_m; q)_n.
\end{align*}

Rogers-Ramanujan type identities are certain sum-to-product identities in which the left side is a mixed sum of some $q$-hypergeometric series and the right sides are some infinite products. The study of them is inspired by the two famous Roger-Ramanujan identities:
\begin{align}
&\sum_{n\geq 0}\frac{q^{n^2}}{(q; q)_n}=\frac{1}{(q, q^4; q^5)_{\infty}},\label{id:RR1}\\
&\sum_{n\geq 0}\frac{q^{n^2+n}}{(q; q)_n}=\frac{1}{(q^2, q^3; q^5)_{\infty}}.\label{id:RR2}
\end{align}

These identities \eqref{id:RR1} and \eqref{id:RR2} were first proved by Rogers \cite{rog94} and later rediscovered by Ramanujan \cite{ram14, ram19}, and Schur \cite{sch17} independently reproduced them. Rogers also proved the following identities.
\begin{align}
&\sum_{n\geq 0}\frac{q^{n^2}}{(q^4; q^4)_{n}}=\frac{1}{(-q^2; q^2)_{\infty}(q, q^4; q^5)_{\infty}},\label{id:Rog1}\\
&\sum_{n\geq 0}\frac{q^{n^2+2n}}{(q^4; q^4)_{n}}=\frac{1}{(-q^2; q^2)_{\infty}(q^2, q^3; q^5)_{\infty}}.\label{id:Rog2}
\end{align}

In terms of integer partitions, the two identities \eqref{id:RR1} and \eqref{id:RR2} may be interpreted as follows.
\begin{theorem}
For each integer $n\geq 1$,
\begin{itemize}
\item[(1)] the number of partitions of $n$ into parts congruent to $\pm 1$ modulo $5$ is the same as the number of partitions of $n$ such that every two consecutive parts have difference at least 2.

\item[(2)] the number of partitions of $n$ into parts congruent to $\pm 2$ modulo $5$ is the same as the number of partitions of $n$ such that every two consecutive parts have difference at least 2 and that the smallest part is greater than $1$.
\end{itemize}
\end{theorem}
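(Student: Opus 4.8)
The plan is to deduce the theorem from the two analytic identities \eqref{id:RR1} and \eqref{id:RR2}, which may be taken as given, by interpreting each side of each identity as a partition generating function and then comparing coefficients of $q^n$. Concretely, for part (1) I would show that the product on the right of \eqref{id:RR1} is the generating function for partitions into parts $\equiv \pm 1 \pmod 5$, while the series on the left enumerates partitions all of whose consecutive differences are at least $2$; part (2) is entirely parallel using \eqref{id:RR2}.

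For the product side I would expand
\[
\frac{1}{(q,q^4;q^5)_\infty}=\prod_{k\ge 0}\frac{1}{1-q^{5k+1}}\cdot\prod_{k\ge 0}\frac{1}{1-q^{5k+4}},
\]
and read off the standard fact that each factor $1/(1-q^{j})$ contributes an unbounded supply of parts equal to $j$; hence the coefficient of $q^n$ counts exactly the partitions of $n$ whose parts are all congruent to $1$ or $4$ modulo $5$. The same expansion applied to $1/(q^2,q^3;q^5)_\infty$ yields partitions into parts $\equiv \pm 2 \pmod 5$, giving the product side of (2).

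The combinatorial heart is the interpretation of the series side, via the ``staircase subtraction'' bijection. Given a partition $\lambda_1>\lambda_2>\cdots>\lambda_n$ with $\lambda_i-\lambda_{i+1}\ge 2$ for all $i$, I would set $\mu_i=\lambda_i-(2(n-i)+1)$; then $\mu_1\ge\mu_2\ge\cdots\ge\mu_n\ge 0$ is an arbitrary partition into at most $n$ parts, and
\[
\sum_{i=1}^{n}\lambda_i=\sum_{i=1}^{n}\mu_i+\sum_{i=1}^{n}\bigl(2(n-i)+1\bigr)=\Bigl(\sum_{i=1}^{n}\mu_i\Bigr)+n^2.
\]
Since partitions into at most $n$ parts are generated by $1/(q;q)_n$, summing the contribution $q^{n^2}/(q;q)_n$ over $n\ge 0$ reproduces the left side of \eqref{id:RR1}. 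For (2) the same map with the shifted staircase $\mu_i=\lambda_i-(2(n-i)+2)$ forces the smallest part $\lambda_n\ge 2$ and replaces $n^2$ by $n^2+n$, matching \eqref{id:RR2}. Matching coefficients of $q^n$ on the two sides of each identity then yields the two asserted equalities.

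The two generating-function readings are routine; the only point demanding care is verifying that staircase subtraction is a genuine bijection onto all partitions with at most $n$ parts (surjectivity and the difference-$\ge 2$ constraint must be checked in both directions), together with the boundary bookkeeping $\lambda_n\ge 1$ versus $\lambda_n\ge 2$ that distinguishes (1) from (2). I do not attempt a direct bijection between the two product-and-difference classes themselves; such a map exists (Garsia--Milne, Bressoud--Zeilberger) but is far more intricate, and routing through the known analytic identities \eqref{id:RR1}--\eqref{id:RR2} is the economical route here.
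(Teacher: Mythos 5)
Your proposal is correct. The paper itself offers no proof of this theorem --- it is stated as the classical partition-theoretic reading of \eqref{id:RR1} and \eqref{id:RR2}, with the analytic identities attributed to Rogers, Ramanujan and Schur --- so there is no argument in the paper to compare against; your derivation (geometric-series expansion of the products for the congruence conditions, staircase subtraction $\mu_i=\lambda_i-(2(n-i)+1)$, resp.\ $\mu_i=\lambda_i-(2(n-i)+2)$, for the sum sides, then coefficient comparison) is the standard and complete way to obtain the theorem from the given identities, and all the bookkeeping you flag (the difference-$\geq 2$ condition transferring to weak monotonicity of $\mu$, the boundary conditions $\lambda_n\geq 1$ versus $\lambda_n\geq 2$, and the staircase weights $n^2$ versus $n^2+n$) checks out.
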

To better understand the partition identities, let's introduce the necessary knowledge about integer partitions. For more details, readers can refer to Andrews' book \cite{andtp}.

For a given non-negative integer $n$, a {\it partition} $\lambda$ of $n$ is a weakly increasing list of positive integers that sum up to $n$. We write $\lambda=\lambda_{1}+\lambda_{2}+\cdots+\lambda_{m}=(\la_1, \la_2, ..., \la_m)$ with $\lambda_{1}\leq \lambda_{2}\cdots\leq\lambda_{m}$, where the {\it weight} of $\lambda$ will be denoted by $|\lambda|=n$ and each $\lambda_{i}$ is called a {\it part} of $\lambda$ for $1\leq i\leq m$. The number of parts $m$ is called the {\it length} of the partition $\lambda$ and is denoted by $\ell(\lambda)$. 
Denote by $\cP(n)$ the set of all partitions of $n$, while its cardinality $|\cP(n)|$ is denoted as $p(n)$ and we set $\cP:=\bigcup_{n\ge 0}\cP(n)$. There are several frequently studied subsets in the set $\cP$ as follows. Let $\cD(n)$ be the set of strict partitions of $n$ that are those with distinct parts, $\cRR(n)$ be the set of partitions of $n$ with consecutive parts differing by at least two, and $\cOD(n)$ be the set of partitions of $n$ with distinct odd parts. We set 
\begin{align*}
\cD:=\bigcup_{n\geq 0}\cD(n),\ \cRR:=\bigcup_{n\geq 0}\cRR(n),\text{ and }\cOD:=\bigcup_{n\geq 0}\cOD(n).
\end{align*}

For a given partition $\lambda$, its {\it Ferrers diagram}~\cite[p.~7]{andtp} is a graphical representation, denoted as $[\lambda]$, using left-justified rows of unit cells, such that the $i$-th row (from bottom up) consists of $i$ cells. For example, the Ferrers diagram of $2+3+3+5$ is shown in Figure~\ref{fig:Ferrers}. 
\begin{figure}[h!]
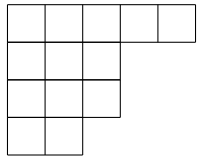

\begin{ferrers}
\addcellrows{5+3+3+2}
\end{ferrers}
\caption{The Ferrers diagram $[\lambda]$ for $\lambda=2+3+3+5$.}
\label{fig:Ferrers}
\end{figure}


So far, a large number of Rogers-Ramanujan type identities have been discovered, we study recent papers and notice the following parameterized Rogers-Ramanujan type identities that were found by Hao-Kuai-Xia \cite{HKX24} and Li-Wang \cite{LW24}.

\begin{theorem}[{cf. \cite[Thm.~2, Thm.~3 and Thm.~6]{HKX24}}]\label{thm:three HKX}
We have
\begin{align}
\sum_{i, j\geq 0}\frac{(-1)^j(x; q^2)_jy^{i+j}q^{i^2+j^2+2ij}}{(q^2; q^2)_i(q^2; q^2)_j}&=(yq; q^2)_{\infty}\sum_{n\geq 0}\frac{(-x; q^2)_{n}y^nq^{n^2}}{(yq, q^2; q^2)_n},\label{id:HKX1}\\
\sum_{i, j, k\geq 0}\frac{q^{\binom{k}{2}+\binom{i+j+k}{2}}(-1)^{i+j}x^{2i+2k+j}y^{2j+2k+i}}{(q; q)_i(q; q)_j(q; q)_k}&=(x^2y; q)_{\infty}\sum_{k\geq 0}\frac{(x; q)_k(-1)^kq^{\binom{k}{2}}(xy^2)^k}{(q; q)_k(x^2y; q)_k},\label{id:HKX2}\\
\sum_{k, j\geq 0}\frac{(-1)^kx^kq^{\binom{k+j}{2}+\binom{j}{2}+j}}{(q; q)_k(q; q)_j(q; q)_{k+j}}&=\frac{(x; q)_{\infty}}{(q; q)_{\infty}}.\label{id:HKX3}
\end{align}
\end{theorem}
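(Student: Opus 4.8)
The plan is to handle the three identities by a common first move --- collapse the $q$-exponent so as to expose a ``Durfee-type core'' indexed by the total of the summation variables --- and then to match the two sides either by a classical partition decomposition or by a sign-reversing involution on Ferrers diagrams, in the spirit of the $\sol$ bijections of the earlier papers. Identity \eqref{id:HKX3} is the cleanest and I would dispose of it first. Starting from the additive simplification $\binom{k+j}{2}+\binom{j}{2}+j=\binom{k}{2}+kj+j^{2}$, the left-hand side factors as $\sum_{k\ge0}\frac{(-x)^{k}q^{\binom{k}{2}}}{(q;q)_{k}}\sum_{j\ge0}\frac{q^{j^{2}+kj}}{(q;q)_{j}(q;q)_{k+j}}$. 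The inner sum over $j$ is exactly the \emph{Durfee rectangle identity} $\sum_{j\ge0}q^{j(j+k)}/\big((q;q)_{j}(q;q)_{j+k}\big)=1/(q;q)_{\infty}$, obtained by slicing an arbitrary partition along its largest $j\times(j+k)$ rectangle, while the outer sum over $k$ is Euler's identity $(x;q)_{\infty}=\sum_{k\ge0}(-x)^{k}q^{\binom{k}{2}}/(q;q)_{k}$. Combining the two gives $(x;q)_{\infty}/(q;q)_{\infty}$; combinatorially $k$ records the excess width of the Durfee rectangle and the two $q$-factorials fill the regions to its right and below.

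For \eqref{id:HKX1} I would first use $i^{2}+j^{2}+2ij=(i+j)^{2}$ to write the left-hand side as $\sum_{i,j\ge0}\frac{(-1)^{j}(x;q^{2})_{j}\,y^{i+j}q^{(i+j)^{2}}}{(q^{2};q^{2})_{i}(q^{2};q^{2})_{j}}$, and then reshape the right-hand side in the same form: moving the prefactor $(yq;q^{2})_{\infty}$ inside the sum via $\frac{(yq;q^{2})_{\infty}}{(yq;q^{2})_{n}}=(yq^{2n+1};q^{2})_{\infty}$ and expanding the latter by Euler's identity converts the right side into $\sum_{i,j\ge0}\frac{(-1)^{i}(-x;q^{2})_{j}\,y^{i+j}q^{(i+j)^{2}}}{(q^{2};q^{2})_{i}(q^{2};q^{2})_{j}}$. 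Thus, comparing the coefficient of $y^{N}q^{N^{2}}$, identity \eqref{id:HKX1} reduces to the finite ``sign-transfer'' identity $\sum_{j=0}^{N}(-1)^{j}\qbin{N}{j}{q^{2}}(x;q^{2})_{j}=\sum_{j=0}^{N}(-1)^{N-j}\qbin{N}{j}{q^{2}}(-x;q^{2})_{j}$, which I would prove by expanding each Pochhammer through the $q$-binomial theorem and collapsing the resulting inner alternating sum $\sum_{s}(-1)^{s}\qbin{m}{s}{q^{2}}$, which vanishes for $m$ odd and equals $(q^{2};q^{4})_{m/2}$ for $m$ even. Bijectively this last step is a sign-reversing involution pairing off partitions that contain an odd residual strip, and it is the natural place for the $\sol$ statistic to enter.

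The genuinely hard case is \eqref{id:HKX2}. Setting $N=i+j+k$ exposes the core $q^{\binom{N}{2}}$ together with the secondary weight $q^{\binom{k}{2}}$, but the monomial $x^{2i+2k+j}y^{2j+2k+i}$ couples the two parameters in a way that distinguishes the combination $x^{2}y$ --- exactly the quantity occurring in the product $(x^{2}y;q)_{\infty}$ and in the denominators $(x^{2}y;q)_{k}$ on the right. My plan is to expand the right-hand side by the $q$-binomial theorem in all of $(x;q)_{k}$, $(x^{2}y;q)_{\infty}$, and $1/(x^{2}y;q)_{k}$, and then to seek a weight-preserving, sign-reversing involution whose fixed points reproduce the three-index left-hand side.

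The main obstacle, in my estimation, lies entirely in \eqref{id:HKX2}: controlling the \emph{coupled bidegrees} in $x$ and $y$ under the involution. Because $x^{2}y$ plays a privileged role, the clean single-index collapse that makes \eqref{id:HKX1} work is unavailable, and the naive expansion produces a four-fold sum against a three-fold target. I therefore expect that a Bailey-type relation or a limiting $q$-Chu--Vandermonde reduction will first be needed to eliminate one summation index, after which the combinatorial matching --- and the corresponding evolution of the $\sol$ statistic advertised in the abstract --- should become transparent.
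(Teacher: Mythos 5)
Your treatments of \eqref{id:HKX1} and \eqref{id:HKX3} are correct but follow a genuinely different route from the paper. For \eqref{id:HKX3} the paper changes $x\to -xq$, multiplies by $(q;q)_\infty$, builds a base partition $(1),\dots,(i),(i+1),(i+3),\dots$, and runs an L-shape sign-reversing involution whose fixed points are $\cD$; your Durfee-rectangle factorization $\sum_{j}q^{j(j+k)}/\big((q;q)_j(q;q)_{j+k}\big)=1/(q;q)_\infty$ combined with Euler's series for $(x;q)_\infty$ is shorter and entirely classical. For \eqref{id:HKX1} the paper proves much more along the way: it first establishes the two-statistic generating function \eqref{eq:ODgf} for partitions with distinct odd parts (tracking $\sol_2$ and $\ell$) via the base-plus-increments bijection, identifies the right side of \eqref{id:HKX1} through the earlier identity \eqref{id:WYR_2para}, and then constructs the $2$L-shape involution $\varphi$ whose fixed points are exactly $\cOD$. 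Your reduction to the finite identity $\sum_{j=0}^{N}(-1)^{j}\qbin{N}{j}{q^{2}}(x;q^{2})_{j}=\sum_{j=0}^{N}(-1)^{N-j}\qbin{N}{j}{q^{2}}(-x;q^{2})_{j}$ is valid (both sides reduce, via $\qbin{N}{j}{q^2}\qbin{j}{s}{q^2}=\qbin{N}{s}{q^2}\qbin{N-s}{j-s}{q^2}$ and Gauss's evaluation of $\sum_m(-1)^m\qbin{M}{m}{q^2}$, to $\sum_{s\equiv N\ (2)}q^{s^2-s}x^s\qbin{N}{s}{q^2}(q^2;q^4)_{(N-s)/2}$), so this part of your argument is complete, though it forgoes the partition-statistic refinement that is the paper's main point.

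The genuine gap is \eqref{id:HKX2}, which you leave as a plan with an acknowledged obstacle (``a four-fold sum against a three-fold target''). The obstacle dissolves once you notice that the index $i$ should be summed first: the monomial factors as $x^{2i+2k+j}y^{2j+2k+i}=(x^2y)^i(xy^2)^j(x^2y^2)^k$, and since $\binom{i+j+k}{2}=\binom{i}{2}+i(j+k)+\binom{j+k}{2}$, the $i$-sum is a pure $q$-exponential producing $(x^2yq^{j+k};q)_\infty$, which matches the combination $(x^2y;q)_\infty/(x^2y;q)_m$ on the right with $m=j+k$. This is exactly the reduction the paper performs (after its cosmetic change $x\to-x$, $y\to-yq$): both infinite products cancel and the identity collapses to a \emph{finite} convolution on the diagonal $j+k=m$. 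That finite identity is in fact elementary: with $j=m-k$ the $q$-exponent $\tfrac{j^2+3j}{2}+k^2+k+jk$ equals $\binom{m}{2}+2m+\binom{k}{2}$, so the diagonal sum becomes
\begin{equation*}
q^{\binom{m}{2}+2m}(xy^2)^m\sum_{k=0}^{m}\qbin{m}{k}{q}\,q^{\binom{k}{2}}x^{k}\Big/(q;q)_m=\frac{(-x;q)_m\,q^{\binom{m}{2}+2m}(xy^2)^m}{(q;q)_m},
\end{equation*}
by the finite $q$-binomial theorem; the paper instead realizes this step as the forward-move bijection $\tau$ between the base-partition triples $\cA_{j,k}^{\II_{\L}}$ and the labeled strict partitions $\cA_{j,k}^{\II_{\R}}$. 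Neither a Bailey-type relation nor a $q$-Chu--Vandermonde limit is needed, but without this ``sum over $i$ first'' step your proposal for \eqref{id:HKX2} does not constitute a proof.
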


\begin{theorem}[{cf. \cite[Eq.~(3.7)]{LW24}}]\label{thm:LW}
We have
\begin{align}
\sum_{i, j\geq 0}\frac{x^iq^{j^2+j+2ij+i}}{(q^2; q^2)_i(q^2; q^2)_j}=\frac{(-q^2; q^2)_{\infty}}{(xq; q^4)_{\infty}}.\label{id:LW}
\end{align}
\end{theorem}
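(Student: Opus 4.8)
The plan is to evaluate the double sum on the left of \eqref{id:LW} in closed form by carrying out the two summations one index at a time, each time invoking one of Euler's classical $q$-series identities, and then reading off the product on the right. The only genuinely clever moves are a reindexing of a quadratic exponent and one product simplification; everything else is routine.

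First I would fix $i$ and sum over $j$. Writing the exponent as
\begin{equation*}
j^2+j+2ij = 2\binom{j}{2} + (2i+2)j,
\end{equation*}
the inner sum becomes $\sum_{j\ge 0}q^{2\binom{j}{2}}(q^{2i+2})^j/(q^2;q^2)_j$, which is exactly Euler's identity $\sum_{j\ge0} z^j q^{2\binom{j}{2}}/(q^2;q^2)_j = (-z;q^2)_\infty$ in base $q^2$ with $z=q^{2i+2}$. Hence the inner sum equals $(-q^{2i+2};q^2)_\infty = (-q^2;q^2)_\infty/(-q^2;q^2)_i$. Combinatorially this step records that $q^{j^2+j+2ij}/(q^2;q^2)_j$ generates partitions into exactly $j$ distinct even parts each at least $2i+2$, so summing over $j$ gives all partitions into distinct even parts $\ge 2i+2$.

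Substituting back and pulling out the $i$-independent factor $(-q^2;q^2)_\infty$, the left side of \eqref{id:LW} becomes
\begin{equation*}
(-q^2;q^2)_\infty \sum_{i\ge 0} \frac{x^i q^i}{(q^2;q^2)_i(-q^2;q^2)_i}.
\end{equation*}
Here I would apply the elementary product identity $(q^2;q^2)_i(-q^2;q^2)_i=(q^4;q^4)_i$ (the case $a=q^2$ of $(a;q)_n(-a;q)_n=(a^2;q^2)_n$), turning the remaining sum into $\sum_{i\ge0}(xq)^i/(q^4;q^4)_i$. A second application of Euler's identity, this time $\sum_{i\ge0} z^i/(q^4;q^4)_i = 1/(z;q^4)_\infty$ in base $q^4$ with $z=xq$, evaluates this to $1/(xq;q^4)_\infty$, and multiplying by the pulled-out factor yields the right-hand side of \eqref{id:LW}.

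Since the paper's emphasis is bijective, I would then recast the argument combinatorially: the right side counts pairs consisting of a partition into distinct even parts together with a partition into parts $\equiv 1 \pmod 4$ (with $x$ marking the number of such parts). The two Euler steps and the merge $(q^2;q^2)_i(-q^2;q^2)_i=(q^4;q^4)_i$ are each realizable by standard weight-preserving bijections, so the whole evaluation lifts to an explicit combinatorial map. I expect the main obstacle to be bookkeeping rather than conceptual: tracking the coupling factor $q^{2ij}$ through the bijection on the even parts, and verifying that the parameter $x$ consistently records the same length statistic on both sides once the two base-$q^2$ products are fused into a single base-$q^4$ product.
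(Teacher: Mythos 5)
Your computation is correct: the exponent rewrite $j^2+j+2ij=2\binom{j}{2}+(2i+2)j$, the evaluation of the inner sum as $(-q^{2i+2};q^2)_\infty=(-q^2;q^2)_\infty/(-q^2;q^2)_i$, the merge $(q^2;q^2)_i(-q^2;q^2)_i=(q^4;q^4)_i$, and the final application of $\sum_{i\ge 0}z^i/(q^4;q^4)_i=1/(z;q^4)_\infty$ are all valid, and together they prove \eqref{id:LW} completely. However, this is not the route the paper features: its designated proof of Theorem~\ref{thm:LW} is the I-shape iterative bijection $\theta_1$ of Theorem~\ref{thm:bij_LW}, which sends a pair $(\la,\mu)$ (a partition into $i$ odd parts together with a partition into distinct even parts all at least $2i+2$) to a pair in $\cF\times\cED$ by repeatedly stripping a doubled I-shape off the first part congruent to $3$ modulo $4$ and depositing its size as a new even part. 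Your exact chain of $q$-series manipulations does appear in the paper, but only as a remark immediately after that theorem, labelled an ``analytic proof.'' The trade-off is as follows: your argument is shorter and needs no case analysis, while the paper's bijection is explicitly weight- and length-preserving ($\ell(\la)=\ell(\beta)$), which is what lets the authors extract the two parameterized generalizations stated in that remark and reuse the same mechanism for Theorem~\ref{thm:WZ}. Your closing paragraph, asserting that the evaluation ``lifts to an explicit combinatorial map'' by composing standard bijections for the two Euler steps and the product merge, is plausible but is only a sketch; making it precise --- in particular tracking the coupling $q^{2ij}$ and fusing the two base-$q^2$ products into one base-$q^4$ product --- is essentially the content of the paper's $\theta_1$, so if a bijective proof is the goal you would still owe those details.
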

It is worth noting that they use the integral method and constant term method to prove these identities. Then inspired by the partition versions of the two Rogers-Ramanujan identities \eqref{id:RR1} and \eqref{id:RR2}, a natural question is whether we can interpret both sides of the above identities \eqref{id:HKX1}-\eqref{id:LW} from the perspective of partitions and further provide a bijective proof. The answer is yes, and this is also one of the main purposes of this paper.

Moreover, Chen and Yin \cite{CY24} proved four new Rogers-Ramanujan type identities for double series by the constant term method and the identities \eqref{id:RR1}-\eqref{id:Rog2}.

\begin{theorem}[{cf. \cite[Thm.~1]{CY24}}]\label{thm:four CY}
We have
\begin{align}
\sum_{i, j\geq 0}\frac{(-1)^{\binom{i-j}{2}}q^{\frac{3i^2}{4}+\frac{ij}{2}+\frac{3j^2}{4}}}{(q; q)_i(q; q)_j}&=\frac{1}{(q^2, q^8; q^{10})_{\infty}},\label{id:CY1}\\
\sum_{i, j\geq 0}\frac{(-1)^{\binom{i-j}{2}}q^{\frac{3i^2}{4}+\frac{ij}{2}+\frac{3j^2}{4}+i+j}}{(q; q)_i(q; q)_j}&=\frac{1}{(q^4, q^6; q^{10})_{\infty}},\label{id:CY2}\\
\sum_{i, j\geq 0}\frac{(-1)^{j}q^{\frac{i^2}{4}+\frac{ij}{2}+\frac{j^2}{4}}}{(q^2; q^2)_i(q^2; q^2)_j}&=\frac{1}{(-q^2; q^2)_{\infty}(q, q^4; q^{5})_{\infty}},\label{id:CY3}\\
\sum_{i, j\geq 0}\frac{(-1)^{j}q^{\frac{i^2}{4}+\frac{ij}{2}+\frac{j^2}{4}+i+j}}{(q^2; q^2)_i(q^2; q^2)_j}&=\frac{1}{(-q^2; q^2)_{\infty}(q^2, q^3; q^{5})_{\infty}}.\label{id:CY4}
\end{align}
\end{theorem}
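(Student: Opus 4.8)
The plan is to collapse each of the four double sums to a single-fold Rogers--Ramanujan or Rogers series by a linear change of the summation indices followed by a classical terminating $q$-binomial evaluation, and then to invoke \eqref{id:RR1}--\eqref{id:Rog2}. Two standard finite identities carry out the arithmetic: the Gauss evaluation $\sum_{j=0}^{2m}(-1)^{j}\qbin{2m}{j}{q}=(q;q^{2})_{m}$ (together with its base-$q^{2}$ analogue), and the terminating theta sum $\sum_{e=-m}^{m}(-1)^{e}q^{e^{2}}\qbin{2m}{m+e}{q}=(q;q^{2})_{m}$, supplemented by the splittings $(q;q)_{2m}=(q;q^{2})_{m}(q^{2};q^{2})_{m}$ and $(q^{2};q^{2})_{2m}=(q^{2};q^{4})_{m}(q^{4};q^{4})_{m}$.

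For \eqref{id:CY3}--\eqref{id:CY4} I would first observe that the exponent is a perfect square, $\frac{i^{2}}{4}+\frac{ij}{2}+\frac{j^{2}}{4}=\frac{(i+j)^{2}}{4}$. Writing $n=i+j$ and summing over $j$ first, the inner sum is $\frac{1}{(q^{2};q^{2})_{n}}\sum_{j=0}^{n}(-1)^{j}\qbin{n}{j}{q^{2}}$, which vanishes for odd $n$ and equals $\frac{(q^{2};q^{4})_{m}}{(q^{2};q^{2})_{2m}}=\frac{1}{(q^{4};q^{4})_{m}}$ when $n=2m$. Hence \eqref{id:CY3} collapses to $\sum_{m\ge0}\frac{q^{m^{2}}}{(q^{4};q^{4})_{m}}$ and \eqref{id:CY4} to $\sum_{m\ge0}\frac{q^{m^{2}+2m}}{(q^{4};q^{4})_{m}}$, which are exactly the left-hand sides of Rogers' identities \eqref{id:Rog1} and \eqref{id:Rog2}.

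The pair \eqref{id:CY1}--\eqref{id:CY2} is the substantive case, since the exponent $\frac{3i^{2}+2ij+3j^{2}}{4}=\frac{(i-j)^{2}}{4}+\frac{(i+j)^{2}}{2}$ produces fractional powers of $q$ termwise. My first step is to show these cancel: the quadratic form and the modulus are symmetric in $i,j$, while $\binom{j-i}{2}=\binom{i-j}{2}+(i-j)$, so the contributions of $(i,j)$ and $(j,i)$ are opposite whenever $i+j$ is odd, leaving only the terms with $i\equiv j\pmod 2$. Setting $i=m+e$, $j=m-e$ with $-m\le e\le m$ turns the sign into $(-1)^{\binom{2e}{2}}=(-1)^{e}$ and the exponent into $2m^{2}+e^{2}$ (with an extra $2m$ for \eqref{id:CY2}). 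The inner sum over $e$ is then $\frac{1}{(q;q)_{2m}}\sum_{e=-m}^{m}(-1)^{e}q^{e^{2}}\qbin{2m}{m+e}{q}=\frac{(q;q^{2})_{m}}{(q;q)_{2m}}=\frac{1}{(q^{2};q^{2})_{m}}$ by the terminating theta sum and the splitting of $(q;q)_{2m}$. Thus \eqref{id:CY1} reduces to $\sum_{m\ge0}\frac{q^{2m^{2}}}{(q^{2};q^{2})_{m}}$ and \eqref{id:CY2} to $\sum_{m\ge0}\frac{q^{2m^{2}+2m}}{(q^{2};q^{2})_{m}}$, i.e.\ to \eqref{id:RR1} and \eqref{id:RR2} with $q$ replaced by $q^{2}$.

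I expect the main obstacle to lie in the sign analysis for \eqref{id:CY1}--\eqref{id:CY2}: one must simultaneously justify the pairwise cancellation that removes the fractional powers, correctly reduce the period-four symbol $(-1)^{\binom{i-j}{2}}$ to $(-1)^{e}$ after the change of variables, and recognize the surviving signed Gaussian sum as the terminating theta identity rather than appealing to heavier Bailey-pair machinery. By contrast the reductions for \eqref{id:CY3}--\eqref{id:CY4} are routine once the exponent is seen to be a perfect square. Finally, since each collapsed single sum $\sum_{m}q^{m^{2}}/(q^{4};q^{4})_{m}$, $\sum_{m}q^{2m^{2}}/(q^{2};q^{2})_{m}$, and the like already carries a transparent partition meaning, the analytic reductions above could in principle be upgraded to weight-preserving bijections, in keeping with the combinatorial viewpoint of the present paper.
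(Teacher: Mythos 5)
Your argument is correct, but note that the paper does not actually prove Theorem~\ref{thm:four CY}: it is quoted from Chen--Yin \cite{CY24}, who (as the paper reports) derived it by the constant term method together with \eqref{id:RR1}--\eqref{id:Rog2}. Your route reaches the same four classical identities by a more elementary path: for \eqref{id:CY3}--\eqref{id:CY4} the diagonal substitution $n=i+j$ and the Gauss evaluation $\sum_{j=0}^{n}(-1)^{j}\qbin{n}{j}{q^{2}}$ (zero for odd $n$, $(q^{2};q^{4})_{m}$ for $n=2m$) collapse the double sum to the left sides of \eqref{id:Rog1}--\eqref{id:Rog2}; for \eqref{id:CY1}--\eqref{id:CY2} your three key observations all check out, namely the pairwise cancellation of the $(i,j)$ and $(j,i)$ terms when $i+j$ is odd (which is exactly what disposes of the genuinely fractional powers $q^{3/4}\bmod 1$), the reduction $(-1)^{\binom{2e}{2}}=(-1)^{e}$ under $i=m+e$, $j=m-e$, and the terminating theta evaluation $\sum_{e=-m}^{m}(-1)^{e}q^{e^{2}}\qbin{2m}{m+e}{q}=(q;q^{2})_{m}$ (I verified the last for $m=1,2$; it is a standard finite form of the triple product), after which $(q;q)_{2m}=(q;q^{2})_{m}(q^{2};q^{2})_{m}$ yields $\sum_{m}q^{2m^{2}(+2m)}/(q^{2};q^{2})_{m}$, i.e.\ \eqref{id:RR1}--\eqref{id:RR2} at $q^{2}$. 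What your approach buys is a self-contained, purely elementary proof avoiding constant-term extraction; what it shares with Chen--Yin is that both ultimately rest on the same four single-sum identities, so neither is ``from scratch.'' Your closing remark that the collapsed single sums invite a bijective upgrade is exactly the direction the paper pursues in Section~\ref{sec:comb pfCY}, where the parameterized versions \eqref{id:para_CY1}--\eqref{id:para_CY2} are established by sign-reversing involutions whose fixed points realize combinatorially the same diagonal cancellation you perform analytically.
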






Based on the above Theorem \ref{thm:four CY}, we obtain the parameterized versions of the identities \eqref{id:CY1}-\eqref{id:CY4} as follows by the identities \eqref{id:RR1}-\eqref{id:Rog2} and constructing two bijections.
\begin{theorem}\label{thm:para_CY}
We have
\begin{align}
\sum_{i, j\geq 0}\frac{(-1)^jx^{i+j}q^{i^2+2ij+j^2}}{(q^8; q^8)_i(q^8; q^8)_j}&=\sum_{n\geq 0}\frac{x^{2n}q^{4n^2}}{(q^{16}; q^{16})_{n}},\label{id:para_CY1}\\
\sum_{i, j\geq 0}\frac{(-1)^{\binom{i-j}{2}}x^{i+j}q^{3i^2+2ij+3j^2}}{(q^4; q^4)_i(q^4; q^4)_j}&=\sum_{n\geq 0}\frac{x^{2n}q^{8n^2}}{(q^{8}; q^{8})_{n}}.\label{id:para_CY2}
\end{align}
\end{theorem}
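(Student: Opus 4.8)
The plan is to prove each identity by collecting the double sum according to the value of $n=i+j$, reducing the inner sum to a finite alternating $q$-binomial coefficient sum, invoking a classical evaluation of that sum, and finishing with a routine Pochhammer factorization. For \eqref{id:para_CY1} the key observation is that the exponent factors as $i^2+2ij+j^2=(i+j)^2$; writing $n=i+j$ and using $\frac{1}{(q^8;q^8)_i(q^8;q^8)_j}=\frac{1}{(q^8;q^8)_n}\qbin{n}{j}{q^8}$, the left-hand side becomes $\sum_{n\ge 0}\frac{x^nq^{n^2}}{(q^8;q^8)_n}A_n$ with $A_n:=\sum_{j=0}^{n}(-1)^j\qbin{n}{j}{q^8}$. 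For \eqref{id:para_CY2} I would instead complete the quadratic form via $3i^2+2ij+3j^2=2(i+j)^2+(i-j)^2$; with $n=i+j$ and $d=i-j=2i-n$ this equals $2n^2+d^2$, so the left-hand side becomes $\sum_{n\ge 0}\frac{x^nq^{2n^2}}{(q^4;q^4)_n}T_n$ with $T_n:=\sum_{i=0}^{n}(-1)^{\binom{2i-n}{2}}q^{(2i-n)^2}\qbin{n}{i}{q^4}$.

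The heart of the argument is the evaluation of $A_n$ and $T_n$. In both cases the odd-$n$ contributions vanish by the reflection $j\leftrightarrow n-j$ (respectively $i\leftrightarrow n-i$): this fixes the Gaussian binomial and multiplies the sign by $(-1)^n$ in $A_n$, and by $(-1)^d$ in $T_n$ (using the exact identity $\binom{-d}{2}=\binom{d}{2}+d$), so for $n$ odd the terms pair off with opposite signs and cancel. For $n=2m$ one has $(-1)^{\binom{2i-2m}{2}}=(-1)^{i-m}$, so with $k=i-m$ the sum $T_{2m}$ becomes the symmetric weighted sum $\sum_{k=-m}^{m}(-1)^kq^{4k^2}\qbin{2m}{m+k}{q^4}$. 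At this point I would invoke the two classical finite identities
\[
\sum_{j=0}^{2m}(-1)^j\qbin{2m}{j}{Q}=(Q;Q^2)_m,\qquad \sum_{k=-m}^{m}(-1)^kQ^{k^2}\qbin{2m}{m+k}{Q}=(Q;Q^2)_m,
\]
which yield $A_{2m}=(q^8;q^{16})_m$ and $T_{2m}=(q^4;q^8)_m$.

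To conclude I would use the factorizations $(q^8;q^8)_{2m}=(q^8;q^{16})_m(q^{16};q^{16})_m$ and $(q^4;q^4)_{2m}=(q^4;q^8)_m(q^8;q^8)_m$. The first gives $\frac{A_{2m}}{(q^8;q^8)_{2m}}=\frac{1}{(q^{16};q^{16})_m}$, so the surviving even-index terms of \eqref{id:para_CY1} sum to $\sum_{m\ge 0}\frac{x^{2m}q^{4m^2}}{(q^{16};q^{16})_m}$, matching its right-hand side; the second gives $\frac{T_{2m}}{(q^4;q^4)_{2m}}=\frac{1}{(q^8;q^8)_m}$, so the surviving terms of \eqref{id:para_CY2} sum to $\sum_{m\ge 0}\frac{x^{2m}q^{8m^2}}{(q^8;q^8)_m}$, again its right-hand side.

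I expect the main obstacle to be the second finite identity, the one weighted by $Q^{k^2}$, which is the genuinely nontrivial input; the first is elementary. Because the paper works bijectively, I anticipate that the intended two bijections are precisely combinatorial proofs of these two finite identities: one interprets $\qbin{2m}{j}{Q}$ (respectively $\qbin{2m}{m+k}{Q}$) as the generating function for partitions fitting inside a rectangular box and then exhibits a sign-reversing involution whose fixed points carry exactly the weight $(Q;Q^2)_m$. Once both finite evaluations are established combinatorially, the reindexing by $n=i+j$ and the Pochhammer factorizations above assemble the two parameterized identities.
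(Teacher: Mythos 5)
Your argument is correct, and both classical inputs check out: the diagonal substitutions $i^2+2ij+j^2=(i+j)^2$ and $3i^2+2ij+3j^2=2(i+j)^2+(i-j)^2$ are right, the odd-$n$ cancellations via $j\leftrightarrow n-j$ (using $\binom{-d}{2}=\binom{d}{2}+d$) are sound, and the two finite evaluations
\[
\sum_{j=0}^{2m}(-1)^j\qbin{2m}{j}{Q}=(Q;Q^2)_m,\qquad \sum_{k=-m}^{m}(-1)^kQ^{k^2}\qbin{2m}{m+k}{Q}=(Q;Q^2)_m
\]
are indeed classical (the first is Gauss's, the second a standard $q$-Chu--Vandermonde/finite Jacobi triple product corollary), after which the factorizations $(Q;Q)_{2m}=(Q;Q^2)_m(Q^2;Q^2)_m$ finish the job. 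However, this is a genuinely different route from the paper's. The paper works entirely bijectively: it interprets each double sum as the signed generating function of explicit labeled partition families ($\cA^{\VI}_i\times\cB^{\VI}_{i,j}$ for the first identity, $\cA^{\VII}_i\times\cB^{\VII}_j\times\{\eta^{(i,j)}\}$ for the second, where a rectangle of $ij$ copies of $2$ absorbs the cross term $q^{2ij}$ and carries the sign $(-1)^{\binom{i-j}{2}}$), and then builds two sign-reversing involutions --- one transferring $2$L-shapes between the two partitions, governed by a new statistic (the first ``$0$-sequence of odd length''), the other comparing the two partitions entrywise and swapping their tails --- whose fixed-point sets are then bijected onto the Rogers--Ramanujan-type sets $\cRR^{\I}_n$ and $\cRR^{\II}_n$ generated by the right-hand sides. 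Your intuition that the intended bijections are ``combinatorial proofs of the two finite identities in disguise'' is essentially right in spirit, though the paper's involutions act directly on the partition-theoretic models of the double sums rather than on $q$-binomial boxes. What your approach buys is brevity and transparency at the level of $q$-series; what the paper's buys is the combinatorial content (new statistics and explicit weight-preserving involutions) that is its stated purpose. If you keep your route, you should supply references or proofs for the two finite identities rather than only asserting them.
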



The rest of this paper is organized as follows. In Section \ref{sec:comb pfHKX}, firstly we use ``base+increments'' framework to prove a theorem about strict odd partitions, and use it to provide a combinatorial proof of Theorem \ref{thm:three HKX}. In Section \ref{sec:comb pfLW}, we give an I-shape iterative bijection for completing the proof of Theorem \ref{thm:LW}. Moreover, we may use this idea to show another identity as an example. In Section \ref{sec:comb pfCY}, we construct two bijections for showing Theorem \ref{thm:para_CY} where a similar definition with ``sol'' on ordinary partitions will appear. Finally, in Section \ref{sec:conclusion} we briefly discuss the content of the next paper in this series, and several directions to be considered in the future.

\section{Combinatorial proofs of Theorem \ref{thm:three HKX}}\label{sec:comb pfHKX}
In this section, firstly we will provide the generating function for strict odd partitions (that is, partitions with distinct odd parts) with two partition statistics. We shall use the combinatorial framework described in the first paper of this series \cite{FL241} --- ``base+increments'' machinery to show this generating function, and this framework originates from the work of Kur\c{s}ung\"{o}z \cite{kur10}. Moreover, we will prove identity \eqref{id:HKX1} by the generating function mentioned above. Finally, we can provide combinatorial proofs for the identities \eqref{id:HKX2} and \eqref{id:HKX3}.

In the previous work \cite{FL241, FL242, FL243}, we defined the partition statistic ``$\sol$'', which counts the number of sequences of odd length in a strict partition $\la\in \cD$. Then we extended this definition to the Rogers-Ramanujan partitions in the set $\cRR$. Given $\la\in \cRR$, a maximal string of parts where adjacent parts differ by exactly two contained in $\la$ is a {\it $2$-sequence} of $\la$. We denote the number of $2$-sequences of odd length in $\la$ by $\sol_2(\la)$. In fact, notice that the set $\cOD$ is a subset of $\cRR$, so the statistic ``$\sol_2$'' can be smoothly defined in $\cOD$. For example, given a partition $\la=1+3+7+13+15+17\in \cOD$, then we can readily know that $\sol_2(\la)=2$ because the set of $2$-sequences is $\{1+3, 7, 13+15+17\}$. Further, we let $\cE_k$ denote the set of partitions into multiples of $k$ and set $\cE_{k,n}:=\{\la\in \cE_k: \ell(\la)\leq n\}$ for all positive integers $k\geq 1$. Note that if $k=1$, then we have $\cE_1=\cP$. 

For finding a double-sum generating function for $\cOD$ with the two partition statistics ``$\sol_2$'' and ``$\ell$ '' (i.e., the length of a partition), we firstly consider a triple $(\beta^{(i, j)}, \mu, \eta)$ where the base partition $\beta^{(i, j)}$ is 
\begin{align}\label{eq:base}
[1,3],[5,7],\cdots ,[4j-3, 4j-1], (4j+1), (4j+5), \cdots , (4j+4i-3)
\end{align} 
and $(\mu, \eta)\in \cE_{2, i}\times \cE_{4, j}$ for $i, j\geq 0$. For convenience, we refer to $[a+b]$ as a {\it pair} and $(c)$ as a {\it singleton} where $a, b, c$ are parts of a certain partition. Clearly note that
\begin{align*}
|\beta^{(i, j)}|=2i^2+4j^2+4ij-i,\ \ell(\beta^{(i, j)})=i+2j, \text{ and }\sol_2(\beta^{(i, j)})=i.
\end{align*}

On the other hand, by definition of this triple, we know that the increment pair $(\mu, \eta)$ will be generated by $1/(q^2; q^2)_i(q^4; q^4)_j$. Therefore, we have the double-sum generating function
\begin{align}
\sum_{\substack{i, j\geq 0\\(\beta^{(i, j)}, \mu, \eta)}}x^{\sol_2(\beta^{(i, j)})}y^{\ell(\beta^{(i, j)})}q^{|\beta^{(i, j)}|+|\mu|+|\eta|}=\sum_{i, j\geq 0}\frac{x^iy^{i+2j}q^{2i^2+4j^2+4ij-i}}{(q^2; q^2)_i(q^4; q^4)_j}
\end{align}
where the triple $(\beta^{(i, j)}, \mu, \eta)$ traverses the set $\{\beta^{(i, j)}\}\times \cE_{2, i}\times \cE_{4, j}$. Let $\cOD_{i, j}:=\{\la\in \cOD: \ell(\la)=i+2j, \text{ and } \sol_2(\la)=i\}$ for all $i, j\geq 0$. Based on the above discussion, we may construct a bijection as follows to provide the double-sum generating function for the set $\cOD_{i, j}$.

\begin{theorem}\label{thm:bij_ODgf}
For $i, j\geq 0$, there exists a bijection
\begin{align*}
\al=\al_{i, j}: \{\beta^{(i, j)}\}\times \cE_{2, i}\times \cE_{4, j}&\ri \cOD_{i, j}\\
(\beta^{(i, j)}, \mu, \eta)&\mapsto \la,
\end{align*}
such that $|\la|=|\beta^{(i, j)}|+|\mu|+|\eta|$, $\ell(\la)=\ell(\beta^{(i, j)})$, and $\sol_2(\la)=\sol_2(\beta^{(i, j)})$. Further, we have
\begin{align}
\sum_{\la\in \cOD}x^{\sol_2(\la)}y^{\ell(\la)}q^{|\la|}=\sum_{i, j\geq 0}\frac{x^iy^{i+2j}q^{2i^2+4j^2+4ij-i}}{(q^2; q^2)_i(q^4; q^4)_j}.\label{eq:ODgf}
\end{align}
\end{theorem}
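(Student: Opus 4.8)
The plan is to realize the base+increments bijection through a single transparent encoding rather than by sliding parts of $\la$ directly. Since the base $\beta^{(i,j)}$ and its statistics are already fixed, it suffices to produce, for each $(i,j)$, a weight-respecting bijection between $\cOD_{i,j}$ and $\cE_{2,i}\times\cE_{4,j}$ with $|\la|=|\beta^{(i,j)}|+|\mu|+|\eta|$; the map $\al_{i,j}$ is then its inverse. First I would pass from distinct odd parts to an ordinary partition: writing $\la\in\cOD_{i,j}$ as $\la_1<\cdots<\la_m$ with $m=i+2j$, set $\nu_t=(\la_t-(2t-1))/2$. Because the $\la_t$ are distinct and odd, $\nu=(\nu_1\le\cdots\le\nu_m)$ is a weakly increasing sequence of $m$ nonnegative integers, and $\la_{t+1}-\la_t=2$ holds exactly when $\nu_{t+1}=\nu_t$. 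Hence the $2$-sequences of $\la$ correspond to the maximal constant blocks of $\nu$, the length of a $2$-sequence equals the multiplicity in $\nu$ of the corresponding value, and therefore $\sol_2(\la)$ equals the number of values occurring in $\nu$ with odd multiplicity. Moreover $|\la|=2|\nu|+m^2$.

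The second step is a multiplicity-splitting bijection. For each value $v\ge 0$ occurring in $\nu$ with multiplicity $c_v$, write $c_v=(c_v\bmod 2)+2\lfloor c_v/2\rfloor$. The values with $c_v$ odd, taken once each, form a strict partition $\sigma$ with exactly $i$ (distinct, possibly zero) parts, while the remaining even multiplicities record twice a partition $\tau$. From $\sum_v c_v=i+2j$ and $\sum_v(c_v\bmod 2)=i$ one obtains $\sum_v\lfloor c_v/2\rfloor=j$ automatically, so $\tau$ has exactly $j$ parts once zero parts are counted, and $|\nu|=|\sigma|+2|\tau|$. Subtracting the staircase $(0,1,\dots,i-1)$ from $\sigma$ yields a partition $\rho$ with at most $i$ parts and $|\sigma|=\binom{i}{2}+|\rho|$, while $\tau$ (with its zero parts dropped) is a partition $\kappa$ with at most $j$ parts. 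Finally I set $\mu:=2\rho\in\cE_{2,i}$ and $\eta:=4\kappa\in\cE_{4,j}$, each scaling being an obvious bijection onto the stated increment set.

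Composing these steps gives the desired bijection, and I would define $\al_{i,j}$ by running them backwards: from $(\mu,\eta)$ recover $\rho,\kappa$, then $\sigma,\tau$, then $\nu$ via $c_v=[v\in\sigma]+2\,\mathrm{mult}_\tau(v)$, then $\la_t=2\nu_t+2t-1$. Length is preserved because $m=i+2j=\ell(\beta^{(i,j)})$ is built in, $\sol_2(\la)=i=\sol_2(\beta^{(i,j)})$ by the block–multiplicity correspondence, and the weight bookkeeping collapses exactly: $2|\nu|+m^2=2\binom{i}{2}+|\mu|+|\eta|+(i+2j)^2=|\beta^{(i,j)}|+|\mu|+|\eta|$. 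The generating function \eqref{eq:ODgf} then follows by noting $\cOD=\bigsqcup_{i,j}\cOD_{i,j}$ (since $i=\sol_2(\la)$ and $j=(\ell(\la)-\sol_2(\la))/2$ are forced, and $\ell\equiv\sol_2\bmod 2$ always) together with the increment factors $1/(q^2;q^2)_i$ and $1/(q^4;q^4)_j$. I expect the main obstacle to be the careful handling of the value $0$ throughout the multiplicity splitting: one must check that the zero block is treated on exactly the same footing as the positive values, so that $\sol_2$ genuinely matches the odd-multiplicity count, and that padding by zero parts makes the part counts land precisely in $\cE_{2,i}$ and $\cE_{4,j}$ (the at-most-$i$ and at-most-$j$ constraints). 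The remaining verifications are routine.
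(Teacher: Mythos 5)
Your proof is correct, but it takes a genuinely different route from the paper. The paper's proof is dynamic: it starts from the base partition $\beta^{(i,j)}$, adds the parts of $\mu$ to the $i$ singletons, and then advances each pair $[4s-3,4s-1]$ by $\eta_s/2$ applications of a ``two forward moves'' plus ``adjustment'' rule, citing the Kur\c{s}ung\"{o}z-style machinery of the first paper in the series and asserting step-by-step reversibility. You instead give a static encoding: the staircase substitution $\nu_t=(\la_t-(2t-1))/2$ turns $\la\in\cOD_{i,j}$ into a weakly increasing sequence whose maximal constant blocks are exactly the $2$-sequences of $\la$, so $\sol_2(\la)$ becomes the number of values of odd multiplicity; splitting each multiplicity as $(c_v\bmod 2)+2\lfloor c_v/2\rfloor$ then peels off a strict partition $\sigma$ with $i$ parts and a partition $\tau$ with $j$ parts, and a second staircase subtraction plus scaling lands in $\cE_{2,i}\times\cE_{4,j}$. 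I checked the bookkeeping: $|\la|=2|\nu|+m^2$ with $m=i+2j$, and $2\binom{i}{2}+(i+2j)^2=2i^2+4j^2+4ij-i=|\beta^{(i,j)}|$, so the weight identity collapses as you claim; the parity fact $\ell(\la)\equiv\sol_2(\la)\pmod 2$ needed for $\cOD=\bigsqcup_{i,j}\cOD_{i,j}$ also holds, and the zero-value issue you flag is genuinely the only delicate point and is handled correctly by your reconstruction rule $c_v=[v\in\sigma]+2\,\mathrm{mult}_\tau(v)$. What each approach buys: the paper's moving-parts formulation slots into a framework reused throughout this series and makes the base partition visibly the minimal configuration, but it leaves the verification that the moves preserve $\ell$ and $\sol_2$ implicit and outsourced to \cite{FL241}; your version is self-contained, makes the preservation of $\sol_2$ an immediate consequence of the block--multiplicity correspondence, and reduces the weight check to one line, at the cost of not exhibiting the ``forward move'' dynamics that the paper's later involutions (e.g.\ the $2$L-shape manipulations in Theorem \ref{thm:involution_1}) are modeled on.
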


\begin{proof}
It suffices to construct the bijection $\al$ for completing the proof of this theorem. The basic idea comes from the work of Fu and Li \cite[Theorem 1.4]{FL241}, so we may proceed in a similar way so that details will be omitted. For a given triple $(\beta^{(i, j)}, \mu, \eta)\in \{\beta^{(i, j)}\}\times \cE_{2, i}\times \cE_{4, j}$ where $\beta^{(i, j)}$ is defined in \eqref{eq:base} and $\mu=\mu_1+\mu_2+\cdots +\mu_i$ and $\eta=\eta_1+\eta_2+\cdots +\eta_j$ (if the number of parts in $\mu$ (resp. $\eta$) is smaller than $i$ (resp. $j$), then we can add zeros to fill the positions as parts), firstly we add all parts in $\mu$ to those corresponding singletons of $\beta^{(i, j)}$, that is,
\begin{align*}
\beta^{(i, j)}&=[1,3],[5,7],\cdots ,[4j-3, 4j-1], (4j+1), (4j+5), \cdots , (4j+4i-3)\\
&\xrightarrow{\text{add $\mu$}}[1,3],[5,7],\cdots ,[4j-3, 4j-1], (4j+1+\mu_1), (4j+5+\mu_2), \cdots , (4j+4i-3+\mu_i).
\end{align*}
Next, we shall move each pair $[4s-3, 4s-1]$ forward $\eta_s/2$ times for $1\leq s\leq j$. Notice that if the pair is moved too many times, then it will hit a singleton. Hence, we need to define the operations ``two forward moves'' (since $\eta_s/2$ is even for $1\leq s\leq j$ and the requirement is to maintain all parts as always being odd) and ``an adjustment'' in the pairs as follows:
\begin{align*}
&(\text{parts}\leq 4s-5), [\mathbf{4s-3}, \mathbf{4s-1}], (4s+1), (\text{parts}\geq 4s+5)\\
\xrightarrow{\text{two forward moves}}&(\text{parts}\leq 4s-5), [\mathbf{4s-1}, \mathbf{4s+1}], (4s+1), (\text{parts}\geq 4s+5)\\
\xrightarrow{\text{an adjustment}}&(\text{parts}\leq 4s-5), (4s-3), [\mathbf{4s+1}, \mathbf{4s+3}], (\text{parts}\geq 4s+5).
\end{align*}
From the aforementioned two steps, we obtain the map $\al$ and $\al((\beta^{(i, j)}, \mu, \eta))\in \cOD_{i, j}$. Furthermore, we can see that this map is reversible step by step, and so it is a bijection.
\end{proof}

Our next goal is to use above generating function \eqref{eq:ODgf} to prove the identities \eqref{id:HKX1}. The main idea is to construct a bijection (in fact, an involution) that implements ``positive and negative cancellation'', so that the set of fixed points is precisely the partition set $\cOD$ generated by the generating function \eqref{eq:ODgf}. In \cite{FL241}, we provide a bijective proof for the following identity discovered by Wei-Yu-Ruan \cite{WYR23}.
\begin{align}
\sum\limits_{i, j\geq 0}\frac{q^{i^2+2ij+2j^2-i-j}}{(q; q)_{i}(q^2; q^2)_{j}}x^{i}y^{2j}=(y; q)_{\infty}\sum\limits_{n\geq 0}\frac{(-x/y; q)_{n}}{(q; q)_{n}(y; q)_{n}}q^{\binom{n}{2}}y^{n}.\label{id:WYR_2para}
\end{align}
Then we observe that the right hand side of \eqref{id:HKX1} will appear by letting $x\ri xy$, $q\ri q^2$ and $y\ri yq$ in \eqref{id:WYR_2para}. We have
\begin{align}
\sum_{i, j\geq 0}\frac{x^iy^{i+2j}q^{2i^2+4j^2+4ij-i}}{(q^2; q^2)_i(q^4; q^4)_j}=(yq; q^2)_{\infty}\sum_{j\geq 0}\frac{(-x; q^2)_{j}y^jq^{j^2}}{(yq, q^2; q^2)_j}.\label{id:main_1}
\end{align}
Actually our bijection for proving \eqref{id:WYR_2para} can be used to prove this equation \eqref{id:main_1} with only minor modifications, as it only involves simple parameter variations. Therefore, to show the identities \eqref{id:HKX1} bijectively, it suffices to show the following theorem:
\begin{theorem}\label{thm:main_2}
We have
\begin{align}
\sum_{i, j\geq 0}\frac{x^iy^{i+2j}q^{2i^2+4j^2+4ij-i}}{(q^2; q^2)_i(q^4; q^4)_j}=\sum_{i, j\geq 0}\frac{(-1)^j(x; q^2)_jy^{i+j}q^{i^2+j^2+2ij}}{(q^2; q^2)_i(q^2; q^2)_j}.\label{id:main_2}
\end{align}
\end{theorem}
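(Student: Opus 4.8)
The plan is to prove \eqref{id:main_2} by showing that its right-hand side is \emph{another} generating function for the set $\cOD$ of strict odd partitions, weighted by $x^{\sol_2}y^{\ell}q^{|\cdot|}$. Since Theorem \ref{thm:bij_ODgf} already identifies the left-hand side of \eqref{id:main_2} with $\sum_{\la\in\cOD}x^{\sol_2(\la)}y^{\ell(\la)}q^{|\la|}$, it suffices to interpret the signed double sum on the right as a signed enumeration of combinatorial objects and to build a sign-reversing, weight-preserving involution whose fixed points are exactly the members of $\cOD$ (with $x$ recording $\sol_2$ and $y$ recording $\ell$). This is the ``positive and negative cancellation'' strategy announced just before the statement.

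First I would set up a base+increments model for the right-hand side. The key observation is that $i^2+2ij+j^2=(i+j)^2$, so $q^{i^2+2ij+j^2}=q^{(i+j)^2}$ depends only on $n:=i+j$; I read $q^{(i+j)^2}$ as the weight of the odd staircase $1,3,5,\dots,2(i+j)-1$, a strict odd partition with $i+j$ parts, which accounts for $y^{i+j}=y^{\ell}$. The two factors $1/(q^2;q^2)_i$ and $1/(q^2;q^2)_j$ supply even increments to two disjoint blocks of staircase parts (keeping every part odd), and the factor $(-1)^j(x;q^2)_j$ is expanded by the finite $q$-binomial theorem into signed, $x$-marked choices on the $j$-block. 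Each term is thus a signed tuple carrying a sign, an $x$-power equal to the number of marks, the fixed $y$-degree $i+j$, and a $q$-weight. Because $q^{(i+j)^2}$ is insensitive to how $n$ splits into $i$ and $j$, a move that transfers a unit between the two blocks (changing $j$ by one and flipping the sign) is automatically compatible with the quadratic weight, which is what makes a sign-reversing involution feasible.

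The involution is cleanest to describe after reorganizing the sum. Writing $n=i+j$ and extracting the $x^{s}$ coefficient from $(x;q^2)_j$ collapses the $j$-dependent signed part to the alternating $q^2$-binomial sum
\[
\sum_{t=0}^{m}(-1)^{t}\qbin{m}{t}{q^2}=\begin{cases}0,&m\text{ odd},\\[2pt](q^2;q^4)_{m/2},&m\text{ even},\end{cases}
\]
with $m=n-s$. Combinatorially this identity is precisely a sign-reversing involution on partitions fitting in a box (toggle the smallest row or column to change $t$ by one): every term with $m$ odd cancels, and the $m=2b$ even terms survive with total weight $(q^2;q^4)_b$. Feeding the survivors back and simplifying with $(q^2;q^2)_{2b}=(q^2;q^4)_b(q^4;q^4)_b$ converts the two $q^2$-denominators into $(q^2;q^2)_s(q^4;q^4)_b$ and the exponent into $2s^2+4bs+4b^2-s$, with $y$-degree $s+2b$ and $x$-degree $s$: exactly the left-hand side of \eqref{id:main_2}. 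In partition language this says the uncancelled objects are strict odd partitions with the $x$-marks recording $\sol_2$.

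The main obstacle is globalizing this staged cancellation into a single clean involution on the full model, i.e.\ defining the transfer move simultaneously on the $x$-marks and on both increment blocks so that it is genuinely an involution, is sign-reversing, and preserves $|\cdot|$, $\ell$, and the $x$-degree, while verifying that its fixed-point set is exactly $\cOD$ with $x\mapsto\sol_2$. The delicate points are the interaction of the marks with the increments at the shared block boundary, the collisions created when increments are added to adjacent staircase parts (which must be resolved to keep the output a legitimate strict odd partition, in the spirit of the ``two forward moves'' and ``an adjustment'' device in the proof of Theorem \ref{thm:bij_ODgf}), and the boundary cases $i=0$, $j=0$, or empty increments. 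The algebraic skeleton above, namely the $q$-binomial expansion of $(x;q^2)_j$ followed by the alternating identity, serves as the bookkeeping check that the involution has the claimed fixed points.
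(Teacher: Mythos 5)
Your proposal is correct, but the part of it that actually proves \eqref{id:main_2} is the algebraic skeleton, and that is a genuinely different route from the paper's. The paper first collapses the $i$-sum of the right-hand side into the infinite product $(-yq^{2j+1};q^2)_\infty$ via the $q$-exponential function, interprets the result as signed pairs $(\la,\mu)\in\cA^{\I}_j\times\cB^{\I}_j$ of labeled odd partitions, and then builds an explicit sign-reversing involution $\varphi$ that transfers a $2$L-shape between $\la$ and $\mu$ according to a comparison of $\sfi(\la)$ (located at the first \emph{illegal} $2$-sequence) with the smallest part of $\mu$; the fixed points are exactly $\cOD$ weighted by $x^{\sol_2}y^{\ell}$, and Theorem~\ref{thm:bij_ODgf} then closes the loop with the left-hand side. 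You instead expand $(x;q^2)_j$ by the finite $q$-binomial theorem, set $n=i+j$ so that $q^{(i+j)^2}$ factors out, and reduce the inner signed sum to Gauss's alternating sum $\sum_{t=0}^{m}(-1)^t\qbin{m}{t}{q^2}$, which vanishes for odd $m$ and equals $(q^2;q^4)_{m/2}$ for even $m=2b$; combined with $(q^2;q^2)_{2b}=(q^2;q^4)_b(q^4;q^4)_b$ and the exponent check $(s+2b)^2+s^2-s=2s^2+4bs+4b^2-s$, this turns the right-hand side directly into the left-hand side. I have verified this computation and it is a complete, self-contained proof of the identity --- indeed more elementary than the paper's, since it does not even invoke Theorem~\ref{thm:bij_ODgf}. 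What your route buys is brevity and a transparent explanation of \emph{why} the cancellation happens (Gauss's sum); what the paper's route buys is the explicit combinatorial involution with $\cOD$ as fixed-point set, which is the stated aim of the series. Be aware that the ``single clean involution on the full model'' you defer as the main obstacle is precisely what the paper supplies, in a different model (labeled pairs, illegal $2$-sequences, $2$L-shape transfers), so if a fully bijective proof is your goal that step is still owed; as a proof of Theorem~\ref{thm:main_2} itself, however, nothing is missing.
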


\begin{remark}
Let $x\ri -x$ and $y\ri -y$ in the right hand side of \eqref{id:main_1}, we have
\begin{align*}
(-yq; q^2)_{\infty}\sum_{j\geq 0}\frac{(x; q^2)_{j}(-y)^jq^{j^2}}{(-yq, q^2; q^2)_j}&=\sum_{j\geq 0}\frac{(x; q^2)_{j}(-y)^jq^{j^2}}{(q^2; q^2)_j}\cdot (-yq^{2j+1}; q^2)_{\infty}\\
&=\sum_{j\geq 0}\frac{(x; q^2)_{j}(-y)^jq^{j^2}}{(q^2; q^2)_j}\cdot\sum_{i\geq 0} \frac{y^iq^{i^2+2ij}}{(q^2; q^2)_i}=\text{RHS of \eqref{id:main_2}}
\end{align*} 
where the equation (II.2) of \cite{GR90} is used in the second step, that is, the $q$-exponential function: 
\begin{align}
\sum_{n\geq 0}\frac{q^{\binom{n}{2}}z^n}{(q; q)_n}=(-z; q)_{\infty}.\label{id:2qe}
\end{align}
However, after making this change $(x, y)\ri (-x, -y)$ in the identity \eqref{id:main_1}, the left hand side of it remains the original form, which also contributes to the proof of the identity \eqref{id:main_2}. In the involution we shall construct below, readers will also see why this change in the parameter symbol does not affect the generation of fixed points.
\end{remark}

Before presenting this involution, let's first review a concept that plays a crucial role in the construction process. Given a partition $\la=\la_1+\la_2+\cdots +\la_m\in \cP$, the $k$-th {\it tL-shape} for $1\leq k\leq m$ and $1\leq t\leq \la_k$ refers to the shaded portion of its Ferrers diagram $[\la]$ as shown below.
\begin{figure}[h!]
\begin{ferrers}
	    \addsketchrows{12+11+9+7+4+3+1}
	\addtext{0.9}{0.3}{$t$ columns}
          \addline{0.5}{0}{0.5}{-1.5}
          \addline{0}{-1.5}{3.5}{-1.5}
          \addline{0}{-1}{4.5}{-1}
          \highlightcellbycolor{1}{1}{black}
        \highlightcellbycolor{1}{2}{black}
        \highlightcellbycolor{1}{3}{black}
        \highlightcellbycolor{1}{4}{black}
        \highlightcellbycolor{2}{1}{black}
        \highlightcellbycolor{2}{2}{black}
        \highlightcellbycolor{2}{3}{black}
        \highlightcellbycolor{2}{4}{black}
        \highlightcellbycolor{3}{1}{black}
        \highlightcellbycolor{3}{2}{black}
           \highlightcellbycolor{3}{3}{black}
        \highlightcellbycolor{3}{4}{black}
        \highlightcellbycolor{3}{5}{black}
        \highlightcellbycolor{3}{6}{black}
                \highlightcellbycolor{3}{7}{black}
                        \highlightcellbycolor{3}{8}{black}
        \highlightcellbycolor{3}{9}{black}
        \addtext{-0.75}{-0.25}{$\lambda_{m}\rightarrow$}
        \addtext{-1}{-0.75}{$\vdots$}
\addtext{-0.75}{-1.25}{$\lambda_{k}\rightarrow$}
 \addtext{-1}{-2.25}{$\vdots$}
\addtext{-0.75}{-3.25}{$\lambda_{1}\rightarrow$}
\end{ferrers}
\caption{The $k$-th $t$L-shape in Ferrers diagram $[\la]$}
\label{fig:2L_shape}
\end{figure}

We denote the size of the $k$-th $t$L-shape as $s_{k, t}=s_{k, t}(\la)=\la_k+t(m-k)$. Now let's turn our attention back to the identity \eqref{id:main_2}. Firstly we need to translate the right hand side of this identity into the form of partition sets. Note that
\begin{align*}
\text{RHS of \eqref{id:main_2}}&=\sum_{j\geq 0}\frac{(x; q^2)_{j}(-y)^jq^{j^2}}{(q^2; q^2)_j}\cdot\sum_{i\geq 0} \frac{y^iq^{i^2+2ij}}{(q^2; q^2)_i}\\
&=\sum_{j\geq 0}\frac{(xy - y)(xy q^2-y)\cdots (xy q^{2j-2}-y)q^{1+3+\cdots +(2j-1)}}{(q^2; q^2)_j}\cdot (-yq^{2j+1}; q^2)_{\infty}.
\end{align*}
 According to this decomposition, we can interpret it as the generating function of the following set of weighted partition pairs $(\la, \mu)\in \cA^{\I}_{j}\times \cB^{\I}_{j}$. 
\begin{itemize}
\item[$\mathbf{(AI)}$] Let $\cA^{\I}_j$ be the set of partitions $\la$ with $j$ distinct odd parts, such that the parts are labeled as either $(xy)$ or $y$, and a part $\la_i$ can be labeled as $(xy)$ only when $\la_{i+1}-\la_i\geq 4$. Further, each partition in this set is assigned a sign $(-1)^m$ where $m$ is the number of parts labeled as $y$. We make the convention $\la_{j+1}=+\infty$, so that the largest part $\la_j$ can be labeled as either $(xy)$ or $y$.

\item[$\mathbf{(BI)}$] Let $\cB^{\I}_j$ be the set of partitions $\mu$ with distinct odd parts and each part not less than $(2j+1)$, and each part is labeled as $y$.
\end{itemize}

For example, we may take $(1+3_x+7+9+11_x, 11+13)\in \cA_{5}^{\I}\times \cB^{\I}_{5}$ where the subscript $x$ indicates this part is labeled as $xy$ and all remaining parts are labeled as $y$. Then this partition pair has weight $x^2y^7q^{55}$ with sign $(-1)^3=-1$. Note that the current weight includes the label. In the latter part of this paper, the weight has the same meaning as it does now. Denote the weight of $(\la, \mu)$ as $\wt(\la, \mu)$. Next we need following concepts to help us better construct this involution.
\begin{itemize}
\item[(1).] For $k\geq 1$, if there is a $2$-sequence with length $2k$ in a certain $\la\in \bigcup_{j\geq 0}\cA^{\I}_j$ where the first $(2k-1)$ parts are labeled as $y$ and the last one is labeled as $(xy)$, or a $2$-sequence with length $(2k-1)$ where all parts are labeled as $y$, then we call such a $2$-sequence {\it illegal}. Otherwise, call it {\it legal}. Hence, in the example above, the $2$-sequence $1+3_x$ is illegal and $7+9+11_x$ is legal.

\item[(2).] We can define a partition statistic $\sfi(\la)=s$ where $s$ is the size of $2$L-shape at the smallest part which is in the first illegal $2$-sequence of $\la\in\bigcup_{j\geq 0}\cA^{\I}_j$. Similarly, in the aforementioned example, we know that the smallest part in the first illegal $2$-sequence is $1$, then $\sfi(\la)=9$. Further, if there is no illegal $2$-sequence in $\la$, then we set $\sfi(\la)=+\infty$.
\end{itemize}
After introducing these useful notions, now we are going to construct the involution $\varphi$ on the set $\bigcup_{j\geq 0}\cA^{\I}_j\times \cB^{\I}_j$.

\begin{theorem}\label{thm:involution_1}
There exists an involution
\begin{align*}
\varphi: \bigcup_{j\geq 0}(\cA^{\I}_j\times \cB^{\I}_j)&\ri \bigcup_{j\geq 0}(\cA^{\I}_j\times \cB^{\I}_j)\\
(\la, \mu)&\mapsto (\hla, \hmu),
\end{align*}
such that $|\la|+|\mu|=|\hla|+|\hmu|$. And when $(\la, \mu)\neq (\hla, \hmu)$, they have the same weights and the opposite signs. Consequently, Theorem \ref{thm:main_2} holds true and \eqref{id:HKX1} follows.
\end{theorem}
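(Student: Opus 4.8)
The plan is to realize $\varphi$ as a weight-preserving, sign-reversing involution whose fixed-point set is exactly $\{(\la,\emptyset): \sfi(\la)=+\infty\}$, and then to identify that set with $\cOD$. First I would pin down the fixed points and their contribution. Because the gap condition $\la_{i+1}-\la_i\geq 4$ fails strictly inside any $2$-sequence, only the top part of each $2$-sequence is eligible to carry the label $(xy)$; consequently a $2$-sequence carries an even number of $y$-labels precisely when it is legal (an even-length $2$-sequence labeled entirely $y$, or an odd-length one with top $(xy)$ and the rest $y$), and an odd number when it is illegal. Hence a pair $(\la,\emptyset)$ whose $2$-sequences are all legal has sign $+1$ and weight $x^{\sol_2(\la)}y^{\ell(\la)}q^{|\la|}$, where the exponent of $x$ counts the odd-length $2$-sequences. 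Reading any strict odd partition and imposing the forced legal labeling (top $(xy)$ on each odd-length $2$-sequence, everything else $y$) gives a weight-preserving bijection between these fixed points and $\cOD$, so by Theorem \ref{thm:bij_ODgf} their generating function is exactly the left-hand side of \eqref{id:main_2}.

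Next I would define $\varphi$ on the remaining pairs by comparing $\sfi(\la)$ — the size of the $2$L-shape at the smallest part of the first illegal $2$-sequence — with the smallest part $\mu_1$ of $\mu$ (setting $\mu_1=+\infty$ when $\mu=\emptyset$). When $\sfi(\la)<\mu_1$ I would \emph{eject}: delete the smallest part of the first illegal $2$-sequence and slide every part of $\la$ lying above it down by $2$, which uniformly converts that illegal sequence into a legal one (a length-$(2k-1)$ all-$y$ sequence becomes a legal length-$(2k-2)$ one, a length-$2k$ sequence topped by $(xy)$ becomes a legal length-$(2k-1)$ one), removing exactly one $y$-label and thus flipping the sign. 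The total content removed equals the $2$L-shape size $\sfi(\la)$, which I would record as a new smallest $y$-part of $\mu$; since the smallest part of the first illegal $2$-sequence sits at position $p$ with value $\geq 2p-1$, this recorded value is $\geq 2j-1$, matching the new ground state of $\cB^{\I}_{j-1}$. When $\sfi(\la)>\mu_1$ I would perform the inverse \emph{insertion}: detach $\mu_1$, decode from it the position and value of a new smallest part to be slid into $\la$ (sliding the parts above up by $2$), creating one illegal $2$-sequence and one extra $y$-label. This is precisely the ``two forward moves / adjustment'' bookkeeping of Theorem \ref{thm:bij_ODgf} run in reverse, so parts stay odd and distinct and $|\la|+|\mu|$ together with the $x$- and $y$-weights are preserved while the sign is reversed; ties $\sfi(\la)=\mu_1$ would be resolved by a fixed convention chosen to keep $\mu$'s parts distinct.

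The hard part will be well-definedness and involutivity. I must verify that after an ejection the new first illegal $2$-sequence of $\hla$ lies strictly above the destroyed one, so its $2$L-shape size exceeds the recorded $\mu$-part $\sfi(\la)$ and the pair lands in the insertion branch, and dually that the decoding in the insertion step reproduces exactly the configuration an ejection would have destroyed, giving $\varphi^2=\mathrm{id}$. The delicate boundary cases are the shortest illegal sequences (length $1$ labeled $y$, and length $2$ topped by $(xy)$), where ejection deletes an entire $2$-sequence, and the insertion case where the new part could collide with the part above or below; handling these uniformly is exactly why the comparison must use the $2$L-shape size rather than the raw part value, and checking the ground-state and distinctness constraints through these cases is the main obstacle. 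Once $\varphi$ is confirmed to be a weight-preserving, sign-reversing involution with the stated fixed points, the signed sum on the right-hand side of \eqref{id:main_2} collapses to the generating function of $\cOD$, establishing Theorem \ref{thm:main_2}; combining this with \eqref{id:main_1} (whose proof is the WYR-type bijection of \cite{FL241} after the substitutions $x\ri xy$, $q\ri q^2$, $y\ri yq$) then yields \eqref{id:HKX1}.
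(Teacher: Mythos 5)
Your proposal follows essentially the same route as the paper: the same statistic $\sfi(\la)$ compared against $\mu_1$, the same $2$L-shape ejection/insertion dichotomy, the same identification of the fixed points $(\la,\emptyset)$ with all-legal labelings as $\cOD$ via Theorem \ref{thm:bij_ODgf}, and the same deferral of the routine well-definedness checks. The only point left open in your write-up, the tie $\sfi(\la)=\mu_1$, is resolved in the paper by placing it in the insertion branch (Case II is $\sfi(\la)\geq\mu_1$), which is forced anyway since after an ejection one only gets $\sfi(\hla)\geq\hmu_1$ rather than strict inequality.
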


\begin{proof}
For a given pair $(\la, \mu)\in \cA_j^{\I}\times \cB_j^{\I}$ where $\mu_1$ is the smallest part of $\mu$, if $\mu$ is the empty partition $\ep$, then we set $\mu_1=+\infty$. The core idea of this involution is to compare the size of $\sfi(\la)$ and $\mu_1$. However, if $\sfi(\la)=\mu_1=+\infty$, then we know that $\mu=\ep$ and there is no illegal $2$-sequence in $\la$ which implies that each $2$-sequence of odd length is labeled as $x$ and all parts are labeled as $y$. Therefore, in this case we set $(\la, \mu)=\varphi((\la, \mu))=(\hla, \hmu)$ which is the fixed point. Then we have
\begin{align*}
\sum_{\substack{(\la, \mu)\in \bigcup_{j\geq 0}(\cA^{\I}_j\times \cB^{\I}_j)\\\varphi((\la, \mu))=(\la, \mu)}}\wt(\la, \mu)=\sum_{\pi\in \cOD}x^{\sol_2(\pi)}y^{\ell(\pi)}q^{|\pi|}
\end{align*}
which implies that Theorem \ref{thm:main_2} holds true and identity \eqref{id:HKX1} follows. Hence, it suffices to show that when $(\la, \mu)\neq (\hla, \hmu)$ they have the same weights and the opposite signs by this involution $\varphi$. Assume that 
$$
\la=\la_1+\la_2+\cdots +\la_j\text{  and  }\mu=\mu_1+\mu_2+\cdots +\mu_{\ell(\mu)}.
$$
Now there are two cases as follows:
\begin{description}
\item[CASE I] If $\sfi(\la)<\mu_1$, then we delete the $2$L-shape at the smallest part (suppose $\la_a$ for a certain $a$) which is in the first illegal $2$-sequence of $\la$, and attach it as the new (smallest) part to $\mu$. That is, $\hla=\la_1+\cdots +\la_{a-1}+(\la_{a+1}-2)+\cdots +(\la_j-2)$ and $\hmu=\sfi(\la)+\mu_1+\cdots +\mu_{\ell(\mu)}$
where $\sfi(\la)=s_{a, 2}=\la_a+2j-2a$ and note that $\la_{a+1}-2\geq \la_a$. Hence we obtain the resulting pair $(\hla, \hmu)\in \cA^{\I}_{j-1}\times \cB^{\I}_{j-1}$. 

\item[CASE II] If $\sfi(\la)\geq \mu_1$, then we remove $\mu_1$ from $\mu$ and insert it into $\la$ as a $2$L-shape. In this case, we know that $\hmu=\mu_2+\cdots +\mu_{\ell(\mu)}$, and for $\hla$ there are two cases: (i) if $\la_1+2j\geq\mu_1$, then we have $\hla=(\mu_1-2j)+(\la_1+2)+\cdots +(\la_j+2)$; (ii) otherwise, there must exist the largest part $\la_b$ such that $\la_b+2j-2b+2<\mu_1$. Then $\hla=\la_1+\cdots +\la_b+(\mu_1-2j+2b)+(\la_{b+1}+2)+\cdots (\la_j+2)$. Hence we obtain the resulting pair $(\hla, \hmu)\in \cA^{\I}_{j+1}\times \cB^{\I}_{j+1}$.
\end{description}
Next to explain the well-definedness of this involution, we leave the verification of the following items to the reader.
\begin{itemize}
\item[(1).] Easily see that in cases I and II $\varphi$ is well-defined; i.e., $(\hla, \hmu)\in \bigcup_{j\geq 0}(\cA^{\I}_j\times \cB^{\I}_j)$ in both cases.

\item[(2).] If $(\la, \mu)$ is in case I, then its image $(\hla, \hmu)$ is in case II; if $(\la, \mu)$ is in case II, then its image $(\hla, \hmu)$ is in case I. Moreover, we have $\varphi^2((\la, \mu))=(\la, \mu)$ for all pair $(\la, \mu)\in \bigcup_{j\geq 0}(\cA^{\I}_j\times \cB^{\I}_j)$.

\item[(3).] For cases I and II, $(\la, \mu)$ and $(\hla, \hmu)$ have the same weight but opposite sign.
\end{itemize}



\end{proof}

\begin{example}
In the same way, denote the empty partition as $\ep$ and denote a part $\la_i$ labeled as $(xy)$ by $(\la_i)_x$. Now we consider all partition pairs $(\la, \mu)\in \bigcup_{j\geq 0}(\cA^{\I}_j\times \cB^{\I}_j)$ with $|\la|+|\mu|=18$. Firstly we can determine the set of fixed points under the involution $\varphi$:
\begin{align*}
\{(1_x+17_x, \ep),\ (3_x+15_x, \ep),\ (5_x+13_x, \ep),\ (7_x+11_x, \ep),\ (1+3+5_x+9_x, \ep)\},
\end{align*}
and then for the remaining partition pairs we have
\\

\begin{minipage}[c]{0.5\textwidth}
\centering
\begin{tabular}{r|l}
sign=$-1$ & sign=$+1$\\
\hline
$(1, 17)$ & $(\ep, 1+17)$\\
$(1, 3+5+9)$ & $(\ep, 1+3+5+9)$\\
$(3, 15)$ & $(\ep, 3+15)$\\
$(3, 3+5+7)$ & $(1+5, 5+7)$\\
$(5, 13)$ & $(\ep, 5+13)$\\
$(7, 11)$ & $(\ep, 7+11)$\\
$(9, 9)$ & $(7+11, \ep)$\\
$(11, 7)$ & $(5+13, \ep)$\\
$(13, 5)$ & $(3+15, \ep)$\\
$(15, 3)$ & $(1+17, \ep)$\\
$(1+3_x, 5+9)$ & $(1_x, 3+5+9)$\\
$(1+5_x, 5+7)$ & $(3_x, 3+5+7)$\\
$(1_x+5, 5+7)$ & $(1+3_x+7, 7)$\\
\end{tabular}
\end{minipage}
\begin{minipage}[c]{0.5\textwidth}
\centering
\begin{tabular}{r|l}
sign=$-1$ & sign=$+1$\\
\hline
$(1+3+5, 9)$ & $(1+3, 5+9)$\\
$(1+3+7, 7)$ & $(1+3+5+9, \ep)$\\
$(1+3_x+7_x, 7)$ & $(1_x+5_x, 5+7)$\\
$(1+17_x, \ep)$ & $(15_x, 3)$\\
$(1_x+17, \ep)$ & $(1_x, 17)$\\
$(3_x+15, \ep)$ & $(3_x, 15)$\\
$(3+15_x, \ep)$ & $(13_x, 5)$\\
$(5_x+13, \ep)$ & $(5_x, 13)$\\
$(5+13_x, \ep)$ & $(11_x, 7)$\\
$(7_x+11, \ep)$ & $(7_x, 11)$\\
$(7+11_x, \ep)$ & $(9_x, 9)$\\
$(1+3+5+9_x, \ep)$ & $(1+3+7_x, 7)$\\
$(1+3+5_x+9, \ep)$ & $(1+3+5_x, 9)$\\
\end{tabular}
\end{minipage}
\end{example}

In the rest of this section, we shall use the core techniques involved in the two bijections above to complete the proofs of the remaining two identities in Theorem \ref{thm:three HKX}, namely \eqref{id:HKX2} and $\eqref{id:HKX3}$.

\begin{proof}[Combinatorial proof of \eqref{id:HKX2}]
To describe the bijection more conveniently, we first make a parameter change. Let $x\rightarrow -x$ and $y\rightarrow -yq$ in \eqref{id:HKX2}, then we have
\begin{align}
\sum_{i, j, k\geq 0}\frac{q^{\binom{k}{2}+\binom{i+j+k}{2}+2j+2k+i}x^{2i+2k+j}y^{2j+2k+i}}{(q; q)_i(q; q)_j(q; q)_k}=(-x^2yq; q)_{\infty}\sum_{m\geq 0}\frac{(-x; q)_mq^{\binom{m}{2}+2m}(xy^2)^m}{(q; q)_m(-x^2yq; q)_m}.\label{id:HKX2-pc}
\end{align}
Subsequently, we will provide combinatorial interpretations for both sides of this identity \eqref{id:HKX2-pc}, respectively.

For the left hand side, we have
\begin{align*}
\text{LHS}&=\sum_{j, k\geq 0}\frac{q^{(j^2+3j)/2+k^2+k+jk}(xy^2)^j(x^2y^2)^k}{(q; q)_j(q; q)_k}\cdot\sum_{i\geq 0}\frac{q^{(i^2+i)/2+ij+ik}(x^2y)^i}{(q; q)_i}\\
&=\sum_{j, k\geq 0}\frac{q^{(j^2+3j)/2+k^2+k+jk}(xy^2)^j(x^2y^2)^k}{(q; q)_j(q; q)_k}\cdot (-x^2yq^{j+k+1}; q)_{\infty}.
\end{align*}
According to this decomposition, we can interpret it as a set of partition quadruples $(\beta^{(j, k)}, \mu, \eta, \ga)\in \{\beta^{(j, k)}\}\times \cP_j\times \cP_k\times \cB_{j, k}^{\II_{\L}}$ for $j, k\geq 0$.
\begin{itemize}
\item[$\mathbf{(AII_{\L})}$] Let $\cA_{j, k}^{\II_{\L}}=\{\beta^{(j, k)}\}\times \cP_j\times \cP_k$ be the set of triples $(\beta^{(j, k)}, \mu, \eta)$ where the base partition $\beta^{(j, k)}$ is
\begin{align*}
(2), (3), \cdots , (j+1), (j+2), (j+4),\cdots ,(j+2k)
\end{align*}
and $(\mu, \eta)\in \cP_j\times \cP_k$. Moreover, each of parts $(2), (3), ..., (j+1)$ is labeled as $(xy^2)$ and each of parts $(j+2), (j+4), ..., (j+2k)$ is labeled as $(x^2y^2)$.

\item[$\mathbf{(BII_{\L})}$] Let $\cB_{j, k}^{\II_{\L}}$ be the set of partitions $\ga$ of $n$ with distinct parts satisfying that the smallest part is not less than $(j+k+1)$ and each part is labeled as $(x^2y)$.
\end{itemize}


For the right hand side, we have
\begin{align*}
\text{RHS}=\sum_{k\geq 0}\frac{(xy^2+x^2y^2)(xy^2+x^2y^2q)\cdots (xy^2+x^2y^2q^{k-1})q^{\frac{k^2}{2}+\frac{3k}{2}}}{(q; q)_k}\cdot(-x^2yq^{k+1}; q)_{\infty}.
\end{align*}
According to this decomposition, we can interpret it as a set of partition pairs $(\la, \pi)\in \cA_{m}^{\II_{\R}}\times \cB_{m}^{\II_{\R}}$ for $m\geq 0$.
\begin{itemize}
\item[$(\mathbf{AII}_{\R})$] Let $\cA_m^{\II_\mathrm{R}}$ be the set of partitions $\la$ with $m$ distinct parts satisfying that the smallest part is $\geq 2$, if $\la_i$ is labeled as $(x^2y^2)$ then $\la_{i+1}-\la_i\geq 2$ and other parts are labeled as $(xy^2)$. Note that let $\la_{m+1}=+\infty$ make sure that $\la_m$ can be labeled as $(x^2y^2)$ or $(xy^2)$.

\item[$(\mathbf{BII}_{\R})$] Let $\cB_m^{\II_{\R}}$ be the set of partitions $\pi$ with distinct parts satisfying that the smallest part is not less than $m+1$ and each part is labeled as $(x^2y)$.
\end{itemize}
By the combinatorial interpretations above, we see that for any $\la\in \cA_m^{\II_\mathrm{R}}$, if the number of parts labeled as $(xy^2)$ is $j$ and the number of parts labeled as $(x^2y^2)$ is $k$ then we may define $\cA_{j, k}^{\II_\mathrm{R}}=\{\la\in \cA_m^{\II_\mathrm{R}}: m=j+k, j \text{ parts labeled as }(xy^2)\text{ and }k\text{ parts labeled as }(x^2y^2)\}$. Hence, $\cB_{j, k}^{\II_{\R}}:=\cB_m^{\II_{\R}}$ with $m=j+k$.

Based on the above discussion, it suffices to construct the bijection between $\cA_{j, k}^{\II_\mathrm{L}}$ and $\cA_{j, k}^{\II_\mathrm{R}}$, which implies the bijection between $\cA_{j, k}^{\II_\mathrm{L}}\times \cB_{j, k}^{\II_\mathrm{L}}$ and $\cA_{j, k}^{\II_\mathrm{R}}\times \cB_{j, k}^{\II_\mathrm{R}}$ if we fix the $\pi=\ga$. Finally we build the bijection
\begin{align*}
\tau: \cA_{j, k}^{\II_{\L}}=\{\beta^{(j, k)}\}\times \cP_j\times \cP_k&\ri\cA_{j, k}^{\II_{\R}}\\
(\beta^{(j, k)}, \mu, \eta)&\mapsto \la
\end{align*}
such that $|\beta^{(j, k)}|+|\mu|+|\eta|=|\la|$ where $\mu=\mu_1+\mu_2+\cdots +\mu_j$ and $\eta=\eta_1+\eta_2+\cdots +\eta_k$. Firstly we add all parts in $\eta$ to those corresponding parts of $\beta^{(j, k)}$, that is,
\begin{align*}
\beta^{(j, k)}\xri{\text{add }\eta}(2)+(3)+\cdots +(j+1)+(j+2+\eta_1)+(j+4+\eta_2)+\cdots +(j+2k+\eta_k).
\end{align*}
Next we will move each parts $(s+1)$ forward $\mu_s$ times for $1\leq s\leq j$ by the following rules:
\begin{equation}
\begin{array}{rll}
&(\text{parts}\leq s), \mathbf{(s+1)}, (s+2), (\text{parts}\geq s+4)\\
\xri{\text{one forward move}}&(\text{parts}\leq s), \mathbf{(s+2)}, (s+2), (\text{parts}\geq s+4)\\
\xri{\text{an adjustment}}&(\text{parts}\leq s), (s+1), \mathbf{(s+3)}, (\text{parts}\geq s+4).
\end{array} \label{rule:singleton}
\end{equation}
We get $\la$ with $|\la|=|\beta^{(j, k)}|+|\mu|+|\eta|$ by a series of operations above. Easily see that $\la\in \cA_{j, k}^{\II_{\R}}$ since this map does not change the label of any part and $\la$ satisfies all conditions in $\cA_{j, k}^{\II_{\R}}$. Moreover, $\tau$ is reversible step by step which implies it is a bijection.
\end{proof}

Finally, we briefly provide the combinatorial proof of \eqref{id:HKX3}. Similarly, we let $x\rightarrow -xq$ and multiply $(q; q)_{\infty}$ on both sides, then we have
\begin{align}
\sum_{i, j\geq 0}\frac{x^iq^{\frac{i^2+i}{2}+ij+j^2}}{(q; q)_i(q; q)_j}\cdot(q^{i+j+1}; q)_{\infty}=(-xq; q)_{\infty}=\sum_{\la\in \cD}x^{\ell(\la)}q^{|\la|}.\label{id:HKX3_pc}
\end{align}
Now we focus on the left hand side of this identity \eqref{id:HKX3_pc}. Firstly the exponent of $q$ in numerator means the base partition
\begin{align*}
\beta^{(i, j)}=(1), (2), \cdots , (i), (i+1), (i+3), \cdots ,(i+2j-1)
\end{align*}
 and each part of $(1), (2), ..., (i)$ is labeled as $x$. Moreover, by the same operations with the bijection $\tau$, we may obtain a bijection between $\{\beta^{(i, j)}\}\times \cP_i\times \cP_j$ and $\cA_{i, j}^{\III}$ defined as follows:
 \begin{itemize}
\item[$\mathbf{(AIII)}$] Let $\cA_{i, j}^{\III}$ be the set of strict partitions $\la$ with $i$ parts labeled as $x$ and $j$ parts not labeled, satisfying that if $\la_k$ is not labeled then $\la_{k+1}-\la_k\geq 2$ and other parts are labeled as $x$. Note that let $\la_{i+j+1}=+\infty$ make sure that $\la_{i+j}$ can be labeled as $x$ or has no label.
\end{itemize}
Further, the term $(q^{i+j+1}; q)_{\infty}$ will generate the following partition set:
\begin{itemize}
\item[$\mathbf{(BIII)}$] Let $\cB_{i, j}^{\III}$ be the set of strict partitions $\mu$ satisfying that the smallest part is not less than $(i+j+1)$ and each partition is assigned with a sign $(-1)^{\ell(\mu)}$.
\end{itemize}
Lastly, we can use the L-shape (i.e., $t$L-shape when $t=1$) to construct an involution on $\bigcup_{i, j\geq 0}\cA_{i, j}^{\III}\times \cB_{i, j}^{\III}$ that is extremely similar with $\varphi$ appearing in Theorem \ref{thm:involution_1}. This will mean that the remaining set of fixed points will be $\cD$ with each part labeled as $x$. We will not elaborate on the details here, and interested readers can try to fill in the gaps. At the end of this section, we provide an example to illustrate this involution and the set of fixed points.


\begin{example}
We consider all partition pairs $(\la, \mu)\in \bigcup_{i, j\geq 0}\cA_{i, j}^{\III}\times \cB_{i, j}^{\III}$ with $|\la|+|\mu|=9$. Firstly there are $8$ fixed points as follows:
\begin{align*}
&(9_x, \ep), (1_x+8_x, \ep), (2_x+7_x, \ep), (3_x+6_x, \ep), (4_x+5_x, \ep), (1_x+2_x+6_x, \ep),\\
&(1_x+3_x+5_x, \ep), (2_x+3_x+4_x, \ep).
\end{align*}
They are all strict partitions of $9$ if we only read $\la$. And then we present a one-to-one correspondence of positive and negative offsets for the remaining partitions.
\\

\begin{minipage}[c]{0.5\textwidth}
\centering
\begin{tabular}{r|l}
sign=$-1$ & sign=$+1$\\
\hline
$(\ep, 9)$ & $(9, \ep)$\\
$(\ep, 1+2+6)$ & $(1, 2+6)$\\
$(\ep, 1+3+5)$ & $(1, 3+5)$\\
$(\ep, 2+3+4)$ & $(2, 3+4)$\\
$(7, 2)$ & $(1+8, \ep)$\\
$(7_x, 2)$ & $(1+8_x, \ep)$\\
$(6, 3)$ & $(2+7, \ep)$\\
$(6_x, 3)$ & $(2+7_x, \ep)$\\
$(5, 4)$ & $(3+6, \ep)$\\
$(5_x, 4)$ & $(3+6_x, \ep)$\\
$(4, 5)$ & $(\ep, 4+5)$\\
$(4_x, 5)$ & $(4_x+5, \ep)$\\
$(3, 6)$ & $(\ep, 3+6)$\\
$(3_x, 6)$ & $(3_x+6, \ep)$\\
$(2, 7)$ & $(\ep, 2+7)$\\
$(2_x, 7)$ & $(2_x+7, \ep)$\\
$(1, 8)$ & $(\ep, 1+8)$\\
$(1_x, 8)$ & $(1_x+8, \ep)$\\
$(1+5, 3)$ & $(4, 2+3)$
\end{tabular}
\end{minipage}
\begin{minipage}[c]{0.5\textwidth}
\centering
\begin{tabular}{r|l}
sign=$-1$ & sign=$+1$\\
\hline
$(1_x+5, 3)$ & $(1_x+2+6, \ep)$\\
$(1+5_x, 3)$ & $(4_x, 2+3)$\\
$(1_x+5_x, 3)$ & $(1_x+2+6_x, \ep)$\\
$(2+4, 3)$ & $(1+3+5, \ep)$\\
$(2_x+4, 3)$ & $(1+3_x+5, \ep)$\\
$(2+4_x, 3)$ & $(1+3+5_x, \ep)$\\
$(2_x+4_x, 3)$ & $(1+3_x+5_x, \ep)$\\
$(1+4, 4)$ & $(3, 2+4)$\\
$(1_x+4, 4)$ & $(1_x+3+5, \ep)$\\
$(1+4_x, 4)$ & $(3_x, 2+4)$\\
$(1_x+4_x, 4)$ & $(1_x+3+5_x, \ep)$\\
$(2_x+3, 4)$ & $(2_x, 3+4)$\\
$(2_x+3_x, 4)$ & $(2_x+3_x+4, \ep)$\\
$(1+3, 5)$ & $(2, 2+5)$\\
$(1_x+3, 5)$ & $(1_x, 3+5)$\\
$(1+3_x, 5)$ & $(2_x, 2+5)$\\
$(1_x+3_x, 5)$ & $(1_x+3_x+5, \ep)$\\
$(1_x+2, 6)$ & $(1_x, 2+6)$\\
$(1_x+2_x, 6)$ & $(1_x+2_x+6, \ep)$
\end{tabular}
\end{minipage}

\end{example}

\section{An I-shape iterative bijection for Theorem \ref{thm:LW} and its application}\label{sec:comb pfLW}

In this section, we will introduce an I-shape iterative bijection to prove Theorem \ref{thm:LW} and further use this idea to complete a combinatorial proof of another identity as an application. For a given partition $\la=\la_1+\la_2+\cdots +\la_m\in \cP$, the $k$-th {\it I-shape} for $1\leq k\leq m$ refers to the shaded portion of its Ferrers diagram $[\la]$ as shown in Figure \ref{fig:I_shape}.
 \begin{figure}[h!]
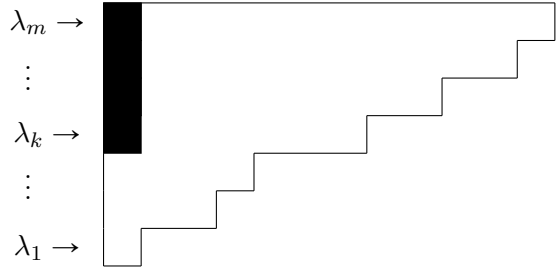

\begin{ferrers}
	    \addsketchrows{12+11+9+7+4+3+1}
        \addline{0.5}{0}{0.5}{-1.5}
      
        \highlightcellbycolor{1}{1}{black}
        \highlightcellbycolor{2}{1}{black}
        \highlightcellbycolor{3}{1}{black}
        \highlightcellbycolor{4}{1}{black}
    
        \addtext{-0.75}{-0.25}{$\lambda_{m}\rightarrow$}
        \addtext{-1}{-0.9}{$\vdots$}
\addtext{-0.75}{-1.75}{$\lambda_{k}\rightarrow$}
 \addtext{-1}{-2.35}{$\vdots$}
\addtext{-0.75}{-3.25}{$\lambda_{1}\rightarrow$}
\end{ferrers}
\caption{The $k$-th I-shape in Ferrers diagram $[\la]$}
\label{fig:I_shape}
\end{figure}

We denote the size of the $k$-th I-shape of $\la$ as $g_k=g_{k}(\la)=\ell(\la)-k+1=m-k+1$. Now we focus on constructing the bijection for identity \eqref{id:LW}. Firstly, for the right side of \eqref{id:LW}, we let $\cED$ be the set of partitions with distinct even parts, and let $\cF$ be the set of (weighted) partitions with each part $\equiv 1\ (\mathrm{mod}\ 4)$ and labeled as $x$. Then we have
\begin{align*}
\sum_{(\la, \mu)\in \cF\times \cED}x^{\ell(\la)}q^{|\la|+|\mu|}=\frac{(-q^2; q^2)_{\infty}}{(xq; q^4)_{\infty}}.
\end{align*}
On the other hand, for the left hand side of \eqref{id:LW}, note that 
\begin{align*}
\sum_{i, j\geq 0}\frac{x^iq^{j^2+j+2ij+i}}{(q^2; q^2)_i(q^2; q^2)_j}=\sum_{i\geq 0}\frac{(xq)^{1+1+\cdots+1}}{(q^2; q^2)_i}\cdot \sum_{j\geq 0}\frac{q^{(2i+2)+(2i+4)+\cdots +(2i+2j)}}{(q^2; q^2)_{j}}.
\end{align*}
Then according to this decomposition and the ``base+increments'' framework, we can interpret it as the following set of (weighted) partitions $\cA_i^{\IV}\times \cB_i^{\IV}$.
\begin{itemize}
\item[$\mathbf{(AIV)}$] Let $\cA_i^{\IV}$ be the set of partitions $\beta$ with $i$ odd parts and each part is labeled as $x$.

\item[$\mathbf{(BIV)}$] Let $\cB_{i}^{\IV}$ be the set of partitions $\ga$ with distinct even parts such that the smallest part is not less than $(2i+2)$.
\end{itemize}

Next we will construct the bijection between $\bigcup_{i\geq 0}(\cA_i^{\IV}\times \cB_i^{\IV})$ and $\cF\times \cED$ by the notation of I-shape. Actually, it can be regarded as an iterative algorithm.

\begin{theorem}\label{thm:bij_LW}
There exists a bijection
\begin{align*}
\theta_1: \bigcup_{i\geq 0}(\cA_i^{\IV}\times \cB_i^{\IV})&\ri \cF\times \cED\\
(\la, \mu)& \mapsto (\beta, \gamma),
\end{align*}
such that $|\la|+|\mu|=|\beta|+|\gamma|$ and $\ell(\la)=\ell(\beta)$. Consequently, Theorem \ref{thm:LW} holds true.
\end{theorem}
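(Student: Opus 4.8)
The plan is to read off the two sides of \eqref{id:LW} as generating functions for the weighted sets already assembled in the excerpt: the left-hand side enumerates pairs $(\la,\mu)\in\bigcup_{i\ge0}(\cA_i^{\IV}\times\cB_i^{\IV})$ by $x^{\ell(\la)}q^{|\la|+|\mu|}$, while the right-hand side enumerates pairs $(\beta,\ga)\in\cF\times\cED$ by $x^{\ell(\beta)}q^{|\beta|+|\ga|}$. Since in both families exactly the parts of $\la$ (resp.\ $\beta$) carry the label $x$ and nothing else does, matching the $x$-degree forces $\ell(\la)=\ell(\beta)=i$; so the entire task is to redistribute the $q$-weight, keeping the number of labeled parts fixed at $i$, between ``$i$ odd parts together with a strict even partition whose least part is at least $2i+2$'' and ``$i$ parts $\equiv 1\pmod 4$ together with an unrestricted strict even partition.'' Constructing such a weight- and length-preserving bijection $\theta_1$ and checking it inverts is exactly what makes Theorem \ref{thm:bij_LW}, hence Theorem \ref{thm:LW}, follow.

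First I would set up the iteration on the parts of $\la$, sweeping them (say from the largest down) and using the $k$-th I-shape to transfer weight into the strict even partition being built. Because $g_k(\la)=\ell(\la)-k+1$ is precisely the number of parts lying weakly above the $k$-th one, a single I-shape move displaces exactly those parts by a controlled amount, which lets me drive the current part into the residue class $1\pmod 4$ while recording the removed columns as new parts of $\ga$. The lower bound $2i+2$ on the least part of $\mu$ is what reserves room for the small even values $2,4,\dots,2i$ that the peeling can produce, so the freshly created parts of $\ga$ sit strictly below the parts inherited from $\mu$ and never collide with them.

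The main obstacle, and the reason a naive ``subtract $2$ from every part that is $\equiv 3\pmod 4$'' cannot work, is maintaining \emph{strictness} of the even partition $\ga$: distinct odd parts of $\la$ can force repeated even defects, which must instead be merged into distinct even parts. This is exactly what the iterative, column-by-column I-shape deposit is designed to handle, since carrying the defects into $\ga$ one I-shape at a time can be arranged to keep the parts of $\ga$ distinct throughout. I expect the substance of the work to be bookkeeping: verifying that after the full sweep one lands in $\cF\times\cED$ with the correct residues, that $\ell$ and the total weight are conserved, and --- the genuinely delicate point --- that each elementary step, and therefore the whole algorithm, is reversible, so that $\theta_1$ is a bijection rather than merely a weight-preserving surjection. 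Once reversibility and the two conservation laws are established, comparing the resulting identity of generating functions with the two interpretations completes the proof, and I would close with a small worked example (as in the preceding sections) to exhibit the iteration in action.
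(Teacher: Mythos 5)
Your overall plan coincides with the paper's: interpret both sides as generating functions for the stated weighted sets, note that the $x$-degree forces $\ell(\la)=\ell(\beta)$, and convert $(\la,\mu)$ into $(\beta,\ga)$ by iteratively peeling I-shapes off the parts of $\la$ that are $\equiv 3\pmod 4$ and depositing their (doubled) sizes $2g_k$ as new even parts of the second partition. You also correctly isolate the one delicate point, namely that the deposited parts must come out \emph{distinct}.

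However, that delicate point is exactly where your proposal has a genuine gap, and your tentative choice of sweep order would make the step fail. With the paper's $k$-th I-shape, removing it subtracts $1$ from $\la_k$ \emph{and from every larger part} $\la_{k+1},\dots,\la_m$. If you sweep ``from the largest down,'' then fixing a part $\equiv 3\pmod 4$ subtracts $2$ from all the larger parts you have already repaired, flipping them back to $\equiv 3\pmod 4$; the procedure does not terminate as intended. The correct order is from the \emph{smallest} offending part upward: then the parts below the current one are untouched and already $\equiv 1\pmod 4$, so the position $t_s$ of the first offending part is strictly increasing in $s$, hence $g_{t_1}>g_{t_2}>\cdots$, and the deposited parts $2g_{t_1}>2g_{t_2}>\cdots$ are automatically distinct, even, and at most $2i<2i+2\le \mu_1$. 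This monotonicity is the entire content of the strictness claim (and of the reversibility: the parts of $\ga$ below $2\ell(\beta)+2$ are precisely the deposited ones, and their sizes tell you which I-shapes to reinsert). Your proposal asserts that distinctness ``can be arranged'' without supplying this argument, so as written it does not yet constitute a proof of the key step.
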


\begin{proof}
Firstly we construct the map $\theta_1$. For given $(\la, \mu)\in\cA_i^{\IV}\times \cB_i^{\IV}$ where $\la=\la_1+ \la_2+\cdots+ \la_i$ and $\mu=\mu_1+\mu_2+\cdots + \mu_{\ell(\mu)}$,  we can divide it into two cases as follows.
\begin{description}
\item[CASE I] If each part $\la_j$ (for $1\leq j\leq i$) is congruent to $1$ modulo $4$, then we set $(\beta, \ga)=\theta_1((\la, \mu))=(\la, \mu)\in \cF\times \cED$.


\item[CASE II] If there are parts of $\la$ being congruent to $3$ modulo $4$, then we will present an iterative algorithm as follows.  
\begin{description}
\item[Initial setup] We set $\la=\la^{(0)}=\la^{(0)}_1+\cdots +\la^{(0)}_i$ where $\la^{(0)}_j=\la_j$ for all $1\leq j\leq i$, and further we set $\mu=\mu^{(0)}$.

\item[Step (s)] (for $1\leq s\leq i$) Search for the first part of $\la^{(s-1)}$ which is congruent to $3$ modulo $4$ from left to right and denote it as $\la_{t_{s}}^{(s-1)}$, then we remove the I-shape at $\la_{t_s}^{(s-1)}$ two times to obtain $2g_{t_s}=2i-2t_s+2$ and add $(2g_{t_s})$ as a new part into $\mu^{(s-1)}$. That is, 
\begin{align*}
\la^{(s-1)}&\mapsto \la^{(s)}=\la^{(s)}_1+\cdots +\la_i^{(s)},\\
\mu^{(s-1)}&\mapsto \mu^{(s)}=(2g_{t_s})+\text{(the parts of $\mu^{(s-1)}$)}
\end{align*}
where for each $j$ we set 
\begin{align*}
\la_j^{(s)}=\left\{\begin{array}{ll}
\la_j^{(s-1)} & \quad\text{ if }1\leq j<t_s,\\
\la_j^{(s-1)}-2 & \quad\text{ if }t_s\leq j\leq i.
\end{array}\right.
\end{align*}
Note that these operations are well-defined since $2g_{t_s}<\mu_1^{(s-1)}$ and $\la_{t_s}^{(s-1)}-\la_{t_{s}-1}^{(s-1)}\geq 2$ and $\la_j^{(s)}$ is still congruent to $1$ or $3$ modulo $4$ for each $j$. 


\item[Final setup] Assuming that the above program is terminated after $p\  (\leq i)$ steps, we obtain the partition pair $(\la^{(p)}, \mu^{(p)})$ where there is no part in $\la^{(p)}$ being congruent $3$ modulo $4$.
\end{description}
Finally, we obtain that $\theta_1((\la, \mu))=(\beta, \gamma)=(\la^{(p)}, \mu^{(p)})\in \cF\times \cED$. And further we have $|\la|+|\mu|=|\beta|+|\ga|$ and $\ell(\la)=\ell(\beta)$ which implies that they have the same weight.
\end{description}

On the other hand, we can construct the inverse map $\theta_1^{-1}$ step by step. For a given $(\beta, \gamma)\in \cF\times \cED$ where $\beta=(\beta_1, \beta_2, ..., \beta_{\ell(\beta)})$ and $\gamma=(\gamma_1, \gamma_2, ..., \gamma_{\ell(\ga)})$. We can divide it into two cases as follows.
\begin{description}
\item[CASE I'] If the part $\ga_1$ is not less than $(2\ell(\beta)+2)$, then we set $(\la, \mu)=\theta^{-1}((\beta, \ga))=(\beta, \ga)\in\cA_{\ell(\beta)}^{\IV}\times \cB_{\ell(\beta)}^{\IV}$.

\item[CASE II'] If there are parts being less than $(2\ell(\beta)+2)$ in $\gamma$, and suppose that $\{\gamma_1, ..., \gamma_t\}\ (t\leq \ell(\ga))$ is the set of all those parts, then we get $\mu=(\gamma_{t+1}, ..., \gamma_{\ell(\ga)})$. Moreover, we add the I-shape of size $\ga_s/2$ two times into $\beta$ where $s$ is from $t+1$ to $\ell(\ga)$ and obtain $\la$. Therefore, it is obvious that $(\la, \mu)\in \cA_{\ell(\beta)}^{\IV}\times \cB_{\ell(\beta)}^{\IV}$, this means that the inverse map $\theta^{-1}_1$ is well-defined.

\end{description}
Based on the above discussion, we easily that the map $\theta$ is a weight-preserving bijection. This is what we want to show.



\end{proof}

Next we consider an example to understand this bijection. We shall use boldface to indicate the parts we operate. Given $(\la, \mu)=(3+3+5+9+9+15, 14+16)\in \cA_{6}^{\IV}\times \cB_{6}^{\IV}$ with $|\la|+|\mu|=74$ and $\ell(\la)=6$, then we have
\begin{align*}
({\bf 3}+3+5+9+9+15, 14+16)&\xri{\text{Step (1)}}(1+1+{\bf 3}+7+7+13, 12+14+16)\\
&\xri{\text{Step (2)}}(1+1+1+5+5+{\bf 11}, 8+12+14+16)\\
&\xri{\text{Step (2)}}(1+1+1+5+5+9, 2+8+12+14+16).
\end{align*}
Hence, we obtain $(\beta, \ga)=\theta_1((\la, \mu))=(1+1+1+5+5+9, 2+8+12+14+16)\in \cF\times \cED$.

\begin{remark}
There exists an analytic proof for \eqref{id:LW}. In fact, we have
\begin{align*}
\sum_{i, j\geq 0}\frac{x^iq^{j^2+j+2ij+i}}{(q^2; q^2)_i(q^2; q^2)_j}&=\sum_{i\geq 0}\frac{x^iq^i}{(q^2; q^2)_{i}}\cdot \sum_{j\geq 0}\frac{q^{j^2+j+2ij}}{(q^2; q^2)_j}=\sum_{i\geq 0}\frac{x^iq^i}{(q^2; q^2)_{i}}\cdot(-q^{2i+2}; q^2)_{\infty}\\
&=\sum_{i\geq 0}\frac{x^iq^i}{(q^4; q^4)_{i}}\cdot(-q^2; q^2)_{\infty}=\frac{(-q^2; q^2)_{\infty}}{(xq; q^4)_{\infty}},
\end{align*}
where we use \eqref{id:2qe} and another $q$-exponential function (II.1) in \cite{GR90}:
\begin{align}
\sum_{n\geq 0}\frac{z^n}{(q; q)_n}=\frac{1}{(z; q)_{\infty}}.\label{id:1qe}
\end{align}
Moreover, with the help of this analytic proof and the bijection described above, we can obtain two generalizations of \eqref{id:LW}: 
\begin{align}
\sum_{i, j\geq 0}\frac{x^i y^j q^{j^2+j+2ij+i}}{(q^2; q^2)_{i}(q^2; q^2)_j}&=(-yq^2; q^2)_{\infty}\sum_{i\geq 0}\frac{x^i q^i}{(-yq^2, q^2; q^2)_i},\\
\sum_{i, j\geq 0}\frac{x^iq^{kj^2/2+kj/2+kij+i}}{(q^k; q^k)_{i}(q^k; q^k)_j}&=\frac{(-q^k; q^k)_{\infty}}{(xq; q^{2k})_{\infty}}\quad \text{ (for any $k\in \bN$)}.
\end{align}
\end{remark}

We will conclude this section with an application as follows. Now we consider the pair of partition set $\cA_i^{\V}\times \cB_i^{\V}$ for $i\geq 0$ defined as:
\begin{itemize}
\item[$\mathbf{(AV)}$] Let $\cA_i^{\V}$ be the set of partitions $\la$ with $(2i)$ or $(2i-1)$ parts satisfies that the difference between adjacent parts is at least $2$. And if the number of parts is $(2i-1)$, then the smallest part is more than $1$.

\item[$\mathbf{(BV)}$] Let $\cB_i^{\V}$ be the set of strict partitions $\mu$ satisfies that the smallest part is not less than $(2i+1)$. 
\end{itemize}
Then we can obtain the following theorem.



\begin{theorem}\label{thm:WZ}
There exists a bijection
\begin{align*}
\theta_2: \bigcup_{i\geq 0}(\cA_i^{\V}\times \cB_{i}^{\V})&\ri \cED\times \cD\\
(\la, \mu)& \mapsto (\beta, \gamma),
\end{align*}
such that $|\la|+|\mu|=|\beta|+|\gamma|$.
\end{theorem}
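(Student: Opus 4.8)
The plan is to prove Theorem~\ref{thm:WZ} in the spirit of the I-shape iterative bijection of Theorem~\ref{thm:bij_LW}, after a reduction that isolates the essential content. First I would record the relevant generating functions. Combining the two length-cases in condition $\mathbf{(AV)}$ gives
\begin{align*}
\sum_{\la\in \cA_i^{\V}}q^{|\la|}=\frac{q^{4i^2}}{(q;q)_{2i}}+\frac{q^{4i^2-2i}}{(q;q)_{2i-1}}=\frac{q^{4i^2-2i}}{(q;q)_{2i}},
\end{align*}
while $\cB_i^{\V}$ contributes $(-q^{2i+1};q)_{\infty}$. Using $(-q^{2i+1};q)_{\infty}=(-q;q)_{\infty}/(-q;q)_{2i}$ and $(q;q)_{2i}(-q;q)_{2i}=(q^2;q^2)_{2i}$, the identity underlying Theorem~\ref{thm:WZ} is
\begin{align*}
\sum_{i\ge 0}\frac{q^{4i^2-2i}}{(q;q)_{2i}}(-q^{2i+1};q)_{\infty}=(-q;q)_{\infty}\sum_{i\ge 0}\frac{q^{4i^2-2i}}{(q^2;q^2)_{2i}}=(-q^2;q^2)_{\infty}(-q;q)_{\infty},
\end{align*}
the last equality being \eqref{id:2qe} specialized at $z=\pm1$ (summing the two instances removes the odd-index terms). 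This dictates the shape of the bijection.

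The reduction is this: for a fixed $i$, a strict partition $\gamma\in\cD$ splits uniquely as $\gamma=\delta\cup\mu$, where $\delta$ collects the parts $\le 2i$ (a subset of $\{1,\dots,2i\}$, matching $(-q;q)_{2i}$) and $\mu$ collects the parts $\ge 2i+1$ (an element of $\cB_i^{\V}$). Hence it suffices to build, for each $i$, a weight-preserving bijection
\begin{align*}
g_i:\cA_i^{\V}\ri\{\beta\in\cED:\ell(\beta)\in\{2i-1,2i\}\}\times\{\delta\subseteq\{1,\dots,2i\}\},\qquad \la\mapsto(\beta,\delta),
\end{align*}
and then set $\theta_2(\la,\mu)=(\beta,\delta\cup\mu)$. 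Since $\ell(\beta)\in\{2i-1,2i\}$ recovers $i=\lceil\ell(\beta)/2\rceil$, the index can be read back off from $\beta$; as the length-blocks $\{2i-1,2i\}$ are disjoint and exhaust all lengths, the maps $g_i$ glue into a single bijection onto $\cED\times\cD$.

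For the core map $g_i$ I would use the base-plus-increments/I-shape toolkit. Padding the $(2i-1)$-part case with a zero smallest part, each $\la\in\cA_i^{\V}$ becomes a chain $0\le\hla_1<\cdots<\hla_{2i}$ with gaps $\ge2$; subtracting the even staircase $0,2,\dots,4i-2$ yields a partition $\nu$ into at most $2i$ parts (this is exactly the $q^{4i^2-2i}/(q;q)_{2i}$ bookkeeping). Recording, for each part-size of the conjugate of $\nu$, the parity of its multiplicity produces the subset $\delta$, while the halved multiplicities give an all-even partition that, on re-adding the even staircase, becomes the distinct-even $\beta$ with exactly $2i$ or $2i-1$ parts; this realizes $1/(q;q)_{2i}=(-q;q)_{2i}/(q^2;q^2)_{2i}$. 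Equivalently, and closer to Theorem~\ref{thm:bij_LW}, one peels successive even parts into $\beta$ by removing I-shapes twice while the residual small parts accumulate into $\delta$.

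The main obstacle will be the uniform handling of the two length-parities of $\cA_i^{\V}$: I must check that the zero-padding is compatible with the hypothesis that the smallest part exceeds $1$ in the odd case, that $g_i$ really outputs a $\beta$ with exactly $2i$ or $2i-1$ parts (so $i$ is recoverable and the blocks glue), that the even parts extracted for $\beta$ are distinct, and that every elementary move inverts. Weight preservation and the disjointness of the part-ranges of $\delta$ and $\mu$ are then immediate, and $\theta_2(\la,\mu)=(\beta,\delta\cup\mu)$ is the desired weight-preserving bijection, proving Theorem~\ref{thm:WZ}.
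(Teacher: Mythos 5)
Your proposal is correct, but it reaches the bijection by a genuinely different route than the paper. The paper's $\theta_2$ is a direct iterative I-shape algorithm mirroring $\theta_1$: repeatedly locate the leftmost odd part of $\la$, strip the width-one column above it, and append the column length $g_{t_s}\leq 2i$ (these lengths strictly decrease, hence stay distinct and below the parts of $\mu$) to $\mu$, until $\la$ has only even parts; well-definedness and invertibility are asserted rather than argued, and only the $2i$-part case of $\mathbf{(AV)}$ is treated ``without loss of generality.'' You instead (a) make the underlying $q$-series identity $\sum_i q^{4i^2-2i}(-q^{2i+1};q)_\infty/(q;q)_{2i}=(-q;q)_\infty(-q^2;q^2)_\infty$ explicit, (b) reduce to a map $g_i$ on the $\cA_i^{\V}$ side alone by splitting $\gamma\in\cD$ at the threshold $2i$ (with $i=\lceil\ell(\beta)/2\rceil$ recoverable from $\beta$), and (c) realize $g_i$ by staircase subtraction, conjugation, and parity of multiplicities --- the classical bijective proof of $1/(q;q)_{2i}=(-q;q)_{2i}/(q^2;q^2)_{2i}$. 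I checked the key points: the merged generating function $q^{4i^2-2i}/(q;q)_{2i}$ for $\cA_i^{\V}$ is right, the zero-padding correctly encodes the ``smallest part $>1$'' hypothesis in the odd-length case, the output $\beta$ does have exactly $2i$ or $2i-1$ distinct even parts, the weights balance, and every step inverts. What each approach buys: the paper's is uniform with the I-shape machinery of the section and (per the subsequent remark) tracks extra statistics such as $\ell(\la)=\ell(\beta)$; yours trades that for a fully explicit factorization of the generating function, a cleaner treatment of both length-parities, and an inverse that reduces to standard, well-understood bijections.
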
 

\begin{proof}
The overall structure of this proof is similar to $\theta_1$, so we only give the construction of the iterative algorithm here. Furthermore, without loss of generality, we consider $(\la, \mu)\in \cA_i^{V}\times \cB_i^{\V}$ where $\la=\la_1+\cdots +\la_{2i}$ and $\mu=\mu_1+\cdots+\mu_{\ell(\mu)}$. Now there are two cases:
\begin{description}
\item[Case I] If there is no odd part in $\la$, then we set $(\beta, \ga)=\theta_2((\la, \mu))=(\la, \mu)\in \cED\times \cD$.

\item[Case II] If there are odd parts in $\la$, then we will present an iterative algorithm as follows.  
\begin{description}
\item[Initial setup] We set $\la=\la^{(0)}=\la^{(0)}_1+\cdots +\la^{(0)}_{2i}$ where $\la^{(0)}_j=\la_j$ for all $1\leq j\leq 2i$, and further we set $\mu=\mu^{(0)}$.

\item[Step (s)] (for $1\leq s\leq 2i$) Search for the first odd part of $\la^{(s-1)}$ from left to right and denote it as $\la_{t_{s}}^{(s-1)}$, then we remove the I-shape at $\la_{t_s}^{(s-1)}$ to obtain $g_{t_s}=i-t_s+1$ and add $g_{t_s}$ as a new part into $\mu^{(s-1)}$. That is, 
\begin{align*}
\la^{(s-1)}&\mapsto \la^{(s)}=\la^{(s)}_1+\cdots +\la_{2i}^{(s)},\\
\mu^{(s-1)}&\mapsto \mu^{(s)}=g_{t_s}+\text{(the parts of $\mu^{(s-1)}$)}
\end{align*}
where for each $j$ we set 
\begin{align*}
\la_j^{(s)}=\left\{\begin{array}{ll}
\la_j^{(s-1)} & \quad\text{ if }1\leq j<t_s,\\
\la_j^{(s-1)}-1 & \quad\text{ if }t_s\leq j\leq 2i.
\end{array}\right.
\end{align*}

\item[Final setup] Assuming that the above program is terminated after $p\  (\leq 2i)$ steps, we obtain the partition pair $(\la^{(p)}, \mu^{(p)})$ where there is no odd part in $\la^{(p)}$.
\end{description}
Finally, we obtain that $\theta_1((\la, \mu))=(\beta, \gamma)=(\la^{(p)}, \mu^{(p)})\in \cF\times \cED$. And further we have $|\la|+|\mu|=|\beta|+|\ga|$ and $\ell(\la)=\ell(\beta)$. 
\end{description}

It is easy to verify that this map $\theta_2$ is well-defined and further it is a bijection.




\end{proof}

\begin{remark}
The version of the generating function corresponding to this bijection $\theta_2$ can be found in \cite[Eq.~(5.52)]{WZ25}:
\begin{align*}
\sum_{i, j\geq 0}\frac{q^{4i^2+2ij+\frac{j^2}{2}-2i+\frac{j}{2}}}{(q; q)_{2i}(q; q)_j}&=(-q; q)_{\infty}(-q^2; q^2)_{\infty}.
\end{align*}
However, it is worth mentioning that this bijection $\theta_2$ actually has proved a more general result, that is, it will preserve some partition statistics.
\end{remark}

\section{Two bijections for Theorem \ref{thm:para_CY}}\label{sec:comb pfCY}

In this section, we will complete the proof of the Theorem \ref{thm:para_CY}. During this proof process, a partition statistic on ordinary partitions will appear, which we call the 0-sequence of odd length in order to align with the definition of ``$\sol$''. Given an ordinary partition $\la\in \cP$, a maximal string of parts where adjacent parts equal contained in $\la$ is a {\it $0$-sequence} of $\la$. For example, for $\la=1+1+2+2+2+4+7+7+7+7\in \cP$, we know its all $0$-sequence is
\begin{align*}
1+1,\ 2+2+2,\ 4,\text{ and }7+7+7+7.
\end{align*}
First let us focus on the right hand sides of the two identities in Theorem \ref{thm:para_CY}. According to the combinatorial interpretation of Rogers-Ramanujan identity, we can easily see that their right hand sides generate the following partition sets (of course, a bijection established by the combinatorial framework ``base+increments'' can also yield this), respectively. 
\begin{align*}
\cRR^{\I}_n&=\{\la\in \cRR: \la=(4+16s_1)+(12+16s_2)+\cdots +(8n-4+16s_n)\},\\
\cRR^{\II}_n&=\{\la\in \cRR: \la=(8+8s_1)+(24+8s_2)+\cdots +(16n-8+8s_n)\},
\end{align*}
where the partition $S=(s_1, s_2, ..., s_n)\in \cP_n^*$. Let $\cP^*$ be the set of ordinary partitions where zeros can be considered as parts, and set $\cP^*_n:=\{\la\in \cP^*: \ell(\la)=n\}$. Note that zeros will also be considered a certain $0$-sequence. Moreover, let $\cRR^{\I}=\bigcup_{n\geq 1}\cRR^{\I}_n$ and $\cRR^{\II}=\bigcup_{n\geq 1}\cRR^{\II}_n$, then we have
\begin{align*}
\sum_{\la\in \cRR^{\I}}x^{2\ell(\la)}q^{|\la|}=\sum_{n\geq 0}\frac{x^{2n}q^{4n^2}}{(q^{16}; q^{16})_{n}},\text{ and }\sum_{\la\in \cRR^{\II}}x^{2\ell(\la)}q^{|\la|}=\sum_{n\geq 0}\frac{x^{2n}q^{8n^2}}{(q^{8}; q^{8})_{n}}.
\end{align*}
With these two sets in place, the right hand sides of Theorem \ref{thm:para_CY} is clear. Now our task is mainly focused on the left hand sides. Let us first look at the first identity, that is, identity \eqref{id:para_CY1}.  Note the following decomposition:
\begin{align*}
\text{LHS}=\sum_{i, j\geq 0}\frac{(-1)^jx^{i+j}q^{i^2+2ij+j^2}}{(q^8; q^8)_i(q^8; q^8)_j}=\sum_{i\geq 0}\frac{x^iq^{1+3+\cdots +(2i-1)}}{(q^8; q^8)_i}\cdot\sum_{j\geq 0}\frac{(-x)^jq^{(2i+1)+(2i+3)+\cdots +(2i+2j-1)}}{(q^8; q^8)_j}.
\end{align*}
Then we can interpret it as the generating function of the following set of (weighted) partition pair $(\la, \mu)\in \cA^{\VI}_i\times\cB^{\VI}_{i, j}$ for $i, j\geq 0$.
\begin{itemize}
\item[$\mathbf{(AVI)}$] Let $\cA_i^{\VI}$ be the set of all partitions with the following form:
\begin{align*}
\la=(1+8k_1)+ (3+8k_2)+\cdots + (2i-1+8k_i)
\end{align*}
where $K=(k_1, k_2, ..., k_i)\in \cP^*_i$ and each part of $\la$ is labeled as $x$.

\item[$\mathbf{(BVI)}$] Let $\cB_{i, j}^{\VI}$ be the set of all partitions with the following form:
\begin{align*}
\mu=(2i+1+8t_1)+(2i+3+8t_2)+(2i+2j-1+8t_j)
\end{align*}
where $T=(t_1, t_2, ..., t_j)\in \cP^*_j$ and each part of $\mu$ is labeled as $(-x)$.
\end{itemize} 
Next we may use these partition sets to construct the involution $\psi_1$ on $\bigcup_{i, j\geq 0}(\cA^{\VI}_i\times \cB^{\VI}_{i, j})$.

\begin{lemma}\label{lem:inv_CY1}
There exists an involution
\begin{align*}
\psi_1: \bigcup_{i, j\geq 0}(\cA^{\VI}_i\times \cB^{\VI}_{i, j})&\ri\bigcup_{i, j\geq 0}(\cA^{\VI}_i\times \cB^{\VI}_{i, j})\\
(\la, \mu)&\mapsto (\beta, \gamma),
\end{align*}
such that $|\la|+|\mu|=|\beta|+|\gamma|$ and $\ell(\la)+\ell(\mu)=\ell(\beta)+\ell(\ga)$. And when $(\la,\mu)\neq (\beta, \gamma)$, they have the same weights and the opposite signs. Consequently, the identity \eqref{id:para_CY1} follows.
\end{lemma}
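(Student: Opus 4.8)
The plan is to realize $\psi_1$ as a sign-reversing, weight-preserving involution on $\bigcup_{i,j\geq 0}(\cA^{\VI}_i\times\cB^{\VI}_{i,j})$ whose fixed points assemble into $\cRR^{\I}$, so that after cancellation the signed sum collapses to $\sum_{\la\in\cRR^{\I}}x^{2\ell(\la)}q^{|\la|}$, i.e. the right-hand side of \eqref{id:para_CY1}. A pair $(\la,\mu)$ carries sign $(-1)^{\ell(\mu)}$ and unsigned weight $x^{\ell(\la)+\ell(\mu)}q^{|\la|+|\mu|}$, so the involution must preserve $|\la|+|\mu|$ and $\ell(\la)+\ell(\mu)$ while changing $\ell(\mu)$ by an odd amount. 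First I would identify the fixed points as those pairs with $\mu=\ep$ for which every $2$-sequence of $\la$ has even length. For such $\la\in\cA^{\VI}_{2n}$ the parts break into consecutive twins $(\la_{2r-1},\la_{2r})$ with $\la_{2r-1}=(4r-3)+8k$ and $\la_{2r}=(4r-1)+8k$, whose sum is $8r-4+16k$; summing the twins is then a weight-preserving bijection from the fixed points onto $\cRR^{\I}$ (with $x^{\ell(\la)}=x^{2n}$), which matches $\sum_{\text{fixed}}\wt$ with the right-hand side.

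For the moving part I would transplant the mechanism of $\varphi$ from Theorem \ref{thm:involution_1}, replacing the label-driven notion of an ``illegal'' $2$-sequence by the plain notion of a $2$-sequence of \emph{odd} length (all parts here are labeled $x$, so no internal labels intervene). Let $\la_a$ be the smallest part of the first odd-length $2$-sequence of $\la$, set $\sfi(\la)=s_{a,2}=\la_a+2(\ell(\la)-a)$, and put $\sfi(\la)=+\infty$ when no such $2$-sequence exists; let $\mu_1$ denote the smallest part of $\mu$, with $\mu_1=+\infty$ when $\mu=\ep$. Then $\psi_1$ compares $\sfi(\la)$ with $\mu_1$. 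In \textbf{Case I} ($\sfi(\la)<\mu_1$) I delete the $2$L-shape at $\la_a$ (remove $\la_a$ and subtract $2$ from every higher part) and adjoin $\sfi(\la)$ as the new smallest part of $\mu$; in \textbf{Case II} ($\sfi(\la)\geq\mu_1$) I remove $\mu_1$ and reinsert it into $\la$ as a $2$L-shape via the placement rule of $\varphi$. Each move transfers exactly one part across, preserving $|\la|+|\mu|$ and $\ell(\la)+\ell(\mu)$ and changing $\ell(\mu)$ by one, so the weight is preserved and the sign flips.

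The routine verification is that both cases land back in $\bigcup_{i,j}(\cA^{\VI}_i\times\cB^{\VI}_{i,j})$. The arithmetic here is clean: writing $\la_a=(2a-1)+8k_a$ and $i=\ell(\la)$, the $2$L-shape has size $\sfi(\la)=2i-1+8k_a\equiv 2(i-1)+1\pmod 8$, so it is a legitimate smallest part for a member of $\cB^{\VI}_{i-1,j+1}$; and the strict inequality $\sfi(\la)<\mu_1$ is exactly $k_a\leq t_1$, the condition that keeps the offset sequence of $\mu$ weakly increasing. Subtracting $2$ from the higher parts of $\la$ merely reindexes their offsets $k_{a+1},\dots,k_i$ without breaking monotonicity, so the residual partition lies in $\cA^{\VI}_{i-1}$; Case II is the symmetric reinsertion.

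The main obstacle---precisely the point left to the reader in Theorem \ref{thm:involution_1}---is the involution identity $\psi_1^2=\mathrm{id}$, i.e. that Cases I and II genuinely swap. I would show that a Case I move shortens the first odd-length $2$-sequence to even length and pushes the first odd-length $2$-sequence of the image strictly above the vacated slot, so that $\sfi$ of the image exceeds the freshly created $\mu_1$ and Case II must fire on it; conversely, that the Case II placement recreates exactly the odd-length $2$-sequence that Case I had destroyed. Granting these two facts, the pairing is sign-reversing off the fixed points, the cancellation leaves only the fixed-point sum computed above, and \eqref{id:para_CY1} follows.
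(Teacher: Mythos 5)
Your proposal is correct and is essentially the paper's own proof: the paper runs the same $2$L-shape transfer involution, except that it phrases everything at the level of the offset vector $K=(k_1,\dots,k_i)$ (comparing $\f(\la)=k_m$, the first entry of the first odd-length $0$-sequence of $K$, with $t_1$) rather than at the level of the parts of $\la$; since a $2$-sequence of $\la$ is exactly a $0$-sequence of $K$ and your inequality $\sfi(\la)<\mu_1$ is equivalent to $k_a\le t_1$, the two case splits, the fixed-point set, and the twin-summing bijection onto $\cRR^{\I}_n$ all coincide with the paper's. (Your twin-sum formula $8r-4+16k$ is in fact the correct version of the paper's slightly garbled $s_m=k_{2m-1}+k_{2m}$.)
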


\begin{proof}
Firstly we describe the map $\psi_1$ with help of $2$L-shape. Given a pair of partitions $(\la, \mu)\in \cA^{\VI}_i\times \cB^{\VI}_{i, j}$, i.e., 
\begin{align*}
\la=(1+8k_1)+\cdots +(2i-1+8k_i),\text{ and }\mu=(2i+1+8t_1)+\cdots + (2i+2j-1+8t_j),
\end{align*}
then we first need an important notion, $\f(\la)$, the first (the position from left to right) part in the first $0$-sequence of odd length in $K=(k_1, k_2, ..., k_i)$. More precisely, if $K$ has no $0$-sequence of odd length, we set $\f(\la):=+\infty$. Furthermore, we set $t_{1}:=+\infty$ if $\mu$ is the empty partition $\ep=(0, ..., 0)$. Then there are following two cases to consider.

\begin{description}
\item[CASE I] If $\f(\la)\leq t_{1}$ and suppose $\f(\la)=k_m$ for some $1\leq m\leq i$, then we remove the 2L-shape at $(2m-1+8_m)$ and add the size $s_{m, 2}(\la)=2i-1+8k_m$ into $\mu$ as the new smallest part. That is, 
\begin{align*}
\la&\mapsto (1+8k_1)+\cdots +(2m-3+8k_{m-1})+(2m-1+8k_{m+1})+\cdots + (2i-3+8k_i)=\beta,\\
\mu&\mapsto(2i-1+8k_m)+(2i+1+8t_1)+(2i+3+8t_2)+\cdots +(2i+2j-1+8t_j)=\ga.
\end{align*}

\item[CASE II] If $\f(\la)>t_1$, then remove $(2i+1+8t_1)$ from $\mu$ and insert it into $\la$ as a 2L-shape. Hence, we obtain $\ga=(2i+3+8t_2)+\cdots +(2i+2j-1+8t_j)$. And we will now present two cases for this unique insertion way: (i) if $k_1\geq t_1$, then we set $\beta=(1+8t_1)+(3+8k_1)+\cdots +(2i+1+8k_i)$; (ii) otherwise, there must be a $p$ such that $k_p<t_1$. Then we obtain $\beta=(1+8k_1)+\cdots +(2p-1+k_p)+(2p+1+t_1)+(2p+3+t_{p+1})+\cdots +(2i+1+8k_i)$.

\end{description}
Subsequently, we will check next items. 
\begin{itemize}
\item[(1)] $(\beta, \gamma)$ in both two cases are well-defined. In fact, from the construction of $\psi_1$ we easily see that $(\beta, \gamma)\in \cA^{\VI}_{i-1}\times \cB_{i-1, j+1}^{\VI}$ in case I, and $(\beta, \gamma)\in \cA^{\VI}_{i+1}\times \cB_{i+1, j-1}^{\VI}$.

\item[(2)] $\psi_1$ is an involution, that is, $\psi_1^2((\la, \mu))=(\la, \mu)$ for all pairs $(\la, \mu)\in \bigcup_{i, j\geq 0}(\cA^{\VI}_i\times \cB^{\VI}_{i, j})$. This is because the image of $(\la, \mu)$ in case I is in case II and the image of $(\la, \mu)$ in case II is in case I. Moreover, we have $|\la|+|\mu|=|\beta|+|\gamma|$ and when $(\la,\mu)\neq (\beta, \gamma)$ they have the same weights and the opposite signs.
\end{itemize}

Finally, if $\f(\la)=t_1=+\infty$, that means there is no $0$-sequence of odd length in $K=(k_1, ..., k_i)$ and $\cB^{\VI}_{i, j}=\emptyset$, then we obtain the set of fixed points omitting the empty partition:
\begin{align*}
\cM_i^{\I}=\{\la\in \cRR: \la=(1+8k_1)+(3+8k_2)+\cdots +(2i-1+8k_i),\text{ and }(k_1, ..., k_i)\in \cP^*_i\}, \end{align*}
where $i$ is even. Now we let $i=2n$ for non-negative integer $n$, then combine two adjacent parts into one part starting from the smallest part:
\begin{align*}
\pi=(4+16s_1)+(12+16s_2)+\cdots +(8n-4+16s_n)\in \cRR^{\I}_n,
\end{align*}
where $s_m=k_{2m-1}+k_{2m}$ for all $1\leq m\leq n$. It is easy to see that the map
\begin{align*}
\cM_{2n}^{\I}&\ri \cRR_{n}^{\I}\\
\la&\mapsto\pi
\end{align*}
is a bijection for all $n\geq 0$, then we complete the proof of \eqref{id:para_CY1}.
\end{proof}
 
In the next part of this section, we will prove the second identity in Theorem \ref{thm:para_CY}, that is, identity \eqref{id:para_CY2}. Note that we have the following decomposition for its left hand side:
\begin{align*}
 \text{LHS}=\sum_{i, j\geq 0}\frac{(-1)^{\binom{i-j}{2}}x^{i+j}q^{3i^2+2ij+3j^2}}{(q^4; q^4)_i(q^4; q^4)_j}=\sum_{i, j\geq 0}(-1)^{\binom{i-j}{2}}q^{2ij}\cdot \frac{x^iq^{3+9+\cdots +(6i-3)}}{(q^4; q^4)_i}\cdot \frac{x^jq^{3+9+\cdots +(6j-3)}}{(q^4; q^4)_j}.
\end{align*}
Then we can interpret it as the generating function of the following set of (weighted) partition triple $(\la, \mu, \eta^{(i, j)})\in \cA_i^{\VII}\times \cB_j^{\VII}\times \{\eta^{(i, j)}\}$ for $i, j\geq 0$.

\begin{itemize}
\item[$\mathbf{(AVII)}$] Let $\cA_i^{\VII}$ be the set of partitions with the following form:
\begin{align*}
\la=(3+4a_1)+(9+4a_2)+\cdots +(6i-3+4a_i),
\end{align*} 
where $A=(a_1, a_2, ..., a_i)\in \cP^*_i$ and each part of $\la$ is labeled as $x$.

\item[$\mathbf{(BVII)}$] Let $\cB_j^{\VII}$ be the set of partitions with the following form:
\begin{align*}
\mu =(3+4b_1)+(9+4b_2)+\cdots +(6j-3+4b_j),
\end{align*}
where $B=(b_1, b_2, ..., b_j)\in \cP^*_j$ and each part of $\mu$ is labeled as $x$.

\item[$\mathbf{(ETA)}$] The last set $\{\eta^{(i, j)}\}$ consists of the following partitions:
\begin{align*}
\eta^{(i, j)}=(2, 2, ..., 2)_{ij}
\end{align*}
where there are $(ij)$ twos and each $\eta^{(i, j)}$ is assigned with a sign $(-1)^{\binom{i-j}{2}}$.
\end{itemize}
Note that 
\begin{align*}
\binom{i-j}{2}\text{ is }\left\{\begin{array}{lllll}
\text{odd},\ &\text{ if }i-j\equiv 2, 3\ (\mathrm{mod}\ 4),\\
\text{even},\ &\text{ if }i-j\equiv 0, 1\ (\mathrm{mod}\ 4).
\end{array}\right.
\end{align*}
It means by simultaneously adding one to $i$ and subtracting one from $j$ (or adding one to $j$ and subtracting one from $i$), we can obtain the opposite parity compared to that before the transformation. This is the original idea behind constructing the involution. Next we will provide the involution.

\begin{lemma}\label{lem:bij_CY2}
There exists an involution
\begin{align*}
\psi_2: \bigcup_{i, j\geq 0}(\cA_i^{\VII}\times \cB_j^{\VII}\times \{\eta^{(i, j)}\})&\ri\bigcup_{i, j\geq 0}(\cA_i^{\VII}\times \cB_j^{\VII}\times \{\eta^{(i, j)}\})\\
(\la, \mu, \nu)&\mapsto(\beta, \gamma, \pi),
\end{align*}
such that $|\la|+|\mu|+|\nu|=|\beta|+|\gamma|+|\pi|$ and $\ell(\la)+\ell(\mu)=\ell(\beta)+\ell(\ga)$. And when $(\la, \mu, \nu)\neq (\beta, \gamma, \pi)$, they have the same weights and the opposite signs. Consequently, the identity \eqref{id:para_CY2} follows.
\end{lemma}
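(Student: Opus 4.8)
The plan is to follow the template of $\psi_1$ from Lemma \ref{lem:inv_CY1}. Reading off the three sets, the left-hand side of \eqref{id:para_CY2} is $\sum (-1)^{\binom{i-j}{2}} x^{i+j} q^{|\la|+|\mu|+|\eta^{(i,j)}|}$ with $|\la|=3i^2+4|A|$, $|\mu|=3j^2+4|B|$ and $|\eta^{(i,j)}|=2ij$, where $A=(a_1,\dots,a_i)$ and $B=(b_1,\dots,b_j)$ are the increment partitions. So it suffices to exhibit a weight-preserving, sign-reversing involution $\psi_2$ whose fixed points reproduce the right-hand side. I will design $\psi_2$ so that its fixed points are exactly the triples with $i=j$ and $A=B$ (equivalently $\la=\mu$): such a triple with $i=j=n$ carries sign $(-1)^{\binom{0}{2}}=+1$ and weight $x^{2n}q^{8n^2+8|A|}$, and summing over $A\in\cP^*_n$ gives $x^{2n}q^{8n^2}/(q^8;q^8)_n$, the $n$-th summand on the right.

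The mechanism of $\psi_2$ is to transfer one part between $\la$ and $\mu$, moving $(i,j)$ to $(i-1,j+1)$ or $(i+1,j-1)$. The excerpt already records that $(i,j)\mapsto(i\pm 1,j\mp 1)$ flips the parity of $\binom{i-j}{2}$; hence any such transfer is automatically sign-reversing, and the $x$-weight $x^{i+j}$ is preserved since a part merely migrates from one partition to the other. The content is the $q$-weight. A short computation shows that replacing $(i,j)$ by $(i+1,j-1)$ changes the combined shape-plus-$\eta$ exponent $3i^2+3j^2+2ij$ by exactly $4(i-j+1)$; here the summand $2ij=|\eta^{(i,j)}|$ is precisely the device that turns the shape change $6(i-j+1)$ into a multiple of $4$, so it can be compensated by adjusting the increments $A,B$, which live in residue $0$ modulo $4$. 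Concretely, $\psi_2$ removes a $2$L-shape from one of $\la,\mu$ (the operation used in $\psi_1$ and in the involution $\varphi$ of Theorem \ref{thm:involution_1}), resizes $\eta^{(i,j)}$ accordingly, and inserts the shape into the other partition at the unique position that restores the rigid form; the reverse operation inverts this.

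To fix the direction and force the fixed-point set, I will introduce a comparison in the spirit of $\psi_1$'s test ``$\f(\la)$ versus $t_1$'': a statistic read from the increment data matched against a part of the opposite partition, arranged so that a move is forced unless $\la$ and $\mu$ already coincide. With this in place, Case I (a part of $\la$ passes to $\mu$) and Case II (its inverse) interchange, so $\psi_2^2=\mathrm{id}$, and all triples with $i+j$ odd cancel in pairs (consistent with the right-hand side containing only even powers of $x$). Finally, the fixed points $i=j=n$, $A=B$ are carried bijectively to $\cRR^{\II}_n$ by merging the common increments with $\eta^{(n,n)}$: the image is $\pi=(8+8a_1)+(24+8a_2)+\cdots+(16n-8+8a_n)$, for which $|\pi|=8n^2+8|A|=|\la|+|\mu|+|\eta^{(n,n)}|$ and $\ell(\pi)=n$; summing these reproduces the right-hand side of \eqref{id:para_CY2} via the already-established generating function for $\cRR^{\II}$.

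The main obstacle is the well-definedness and involutivity of the transfer. After deleting a $2$L-shape and reinserting it one must check that the image still lies in some $\cA^{\VII}_{i'}\times\cB^{\VII}_{j'}\times\{\eta^{(i',j')}\}$ — correct residues modulo $4$, gaps at least $6$, and weakly ordered increments — that the insertion slot is unique, and that the resizing of $\eta$ matches on the nose; I also must verify that the comparison statistic singles out $\la=\mu$ and nothing else as fixed. After the substitution $q^4\mapsto Q$ together with $i-j=2e$, these bookkeeping checks are the combinatorial shadow of the finite evaluation $\sum_{e}(-1)^{e}Q^{e^2}/\big((Q;Q)_{n+e}(Q;Q)_{n-e}\big)=1/(Q^2;Q^2)_n$, so aligning the increment adjustments and the $\eta$-resizing exactly is where the real effort lies.
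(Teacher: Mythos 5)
Your high-level architecture matches the paper's: a sign-reversing, weight-preserving involution that trades one part between $\la$ and $\mu$ (so $(i,j)\mapsto(i\pm1,j\mp1)$ flips the parity of $\binom{i-j}{2}$), with fixed points exactly the triples $i=j$, $\la=\mu$, which are then merged coordinatewise with $\eta^{(n,n)}$ to land in $\cRR^{\II}_n$. Your computation that the shape-plus-$\eta$ exponent changes by $4(i-j+1)$, hence is absorbable into the increments, is correct and is indeed the reason the construction can work.

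However, the proof has a genuine gap at its center: you never define the involution. The sentence ``I will introduce a comparison in the spirit of $\psi_1$'s test \dots arranged so that a move is forced unless $\la$ and $\mu$ already coincide'' is a placeholder for precisely the step that carries all the content. The paper's choice is concrete and different from what you gesture at: writing $\la$ and $\mu$ via their increment sequences $A=(a_1,\dots,a_i)$ and $B=(b_1,\dots,b_j)$ (with the convention $a_{i+1}=b_{j+1}=+\infty$), one locates the \emph{first} index $p$ with $a_p\neq b_p$; if $a_p>b_p$ the $p$-th part of $\mu$ migrates into $\la$ at position $p$ as $6p-3+4b_p$, with all later parts of $\la$ keeping their values but re-read against the shifted bases (so their increments drop by $1$) and all later increments of $\mu$ rising by $1$; if $a_p<b_p$ one does the symmetric move. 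The inequalities $a_{p+1}\geq a_p$ and $b_{p+1}\geq b_p$ are what guarantee the image is again of the prescribed rigid form, that Case I and Case II swap, and that $\psi_2^2=\mathrm{id}$; none of this can be checked for an unspecified ``comparison statistic.'' Note also that your proposed mechanism --- delete a $2$L-shape and reinsert it at ``the unique position that restores the rigid form'' --- is not the paper's operation and is not obviously well defined here, since after a $2$L-shape deletion the remaining parts of $\la$ generally fail the residue/gap pattern of $\cA^{\VII}_{i-1}$ unless the later increments are simultaneously re-indexed, which is exactly the bookkeeping the first-discrepancy rule is designed to control. As written, the proposal is a correct plan plus correct peripheral checks, but the involution itself is missing.
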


\begin{proof}
For given a partition triple $(\la, \mu, \nu)\in \cA_i^{\VII}\times \cB_{j}^{\VII}\times \{\eta^{(i, j)}\}$ where $\la$ and $\mu$ are defined above and $\nu=\eta^{(i, j)}$, firstly we need to make a convention, that is, $a_{i+1}=+\infty$ and $b_{j+1}=+\infty$. Now suppose that $p$ is the position where the first time a different part appears in these two partitions $\la$ and $\mu$. That is, $a_1=b_1, ..., a_{p-1}=b_{p-1}$ and $a_p\neq b_p$. There exist two cases as follows.
\begin{description}
\item[CASE I] If $a_p>b_p$, then we perform the following operations:
\begin{align*}
\la&\mapsto \beta=\left\{\begin{array}{lll}(3+4a_1)+ \cdots +(6p-9+4a_{p-1})\\
+(6p-3+4b_p)+(6p+3+4(a_p-1))+ \cdots +( 6i+3+4(a_i-1))\end{array}\right\};\\
\mu&\mapsto \gamma=\left\{\begin{array}{llll}(3+4b_1)+\cdots +( 6p-9+4b_{p-1})\\
+ (6p-3+4(b_{p+1}+1))+\cdots +(6j-9+4(b_j+1))
\end{array}\right\};\\
\nu & \mapsto \pi=(2, 2, ..., 2)_{(i+1)(j-1)}.
\end{align*}

\item[CASE II] If $a_p<b_p$, then we perform the following operations:
\begin{align*}
\la&\mapsto\beta=\left\{\begin{array}{lll}(3+4a_1)+\cdots +(6p-9+4a_{p-1})\\
+(6p-3+4(a_{p+1}+1))+ \cdots +(6i-9+4(a_i+1))
\end{array}\right\};\\ 
\mu&\mapsto \ga=\left\{\begin{array}{lll}(3+4b_1)+\cdots +(6p-9+4b_{p-1})\\
+(6p-3+4a_p)+(6p+3+4(a_{p}-1))+\cdots +(6j+3+4(b_j-1))
\end{array}\right\};\\
\nu&\mapsto \pi=(2, 2, ..., 2)_{(i-1)(j+1)}.
\end{align*}
\end{description}
Easily check that 
\begin{itemize}
\item[(1)] $(\beta, \gamma, \pi)$ in above two cases are well-defined. Since in case I, the resulting partition triple $(\beta, \gamma, \nu)\in \cA_{i+1}^{\VII}\times \cB_{j-1}^{\VII}\times\{\eta^{(i+1, j-1)}\}$ and in case II, the resulting partition triple $(\beta, \gamma, \eta)\in \cA_{i-1}^{\VII}\times \cB_{j+1}^{\VII}\times\{\eta^{(i-1, j+1)}\}$.

\item[(2)] $\psi_2$ is a weight-preserving and sign-opposite involution. Since we may see that $|\la|+|\mu|+|\nu|=|\beta|+|\ga|+|\pi|$ and $\ell(\la)+\ell(\mu)=\ell(\beta)+\ell(\ga)$, further $\nu$ and $\pi$ always have the opposite signs in both two cases. Moreover, since $a_{p+1}\geq a_p$ and $b_{p+1}\geq b_p$, then the image of the triple $(\la, \mu, \nu)$ in case I is in case II and vice versa.
\end{itemize}

Finally, we can obtain the set of fixed points satisfying that $i=j$ and $a_m=b_m$ for all $1\leq m\leq i$. Then for $i\geq 0$ the set $\cM^{\II}_i$ of fixed points consists of all triples $(\la, \mu, \nu)$ with the following form: 
\begin{align*}
\nu&=\eta^{(i, i)}=(2, 6, ..., 4i-2);\\
\la=\mu&=(3+4a_1)+(9+4a_2)+\cdots +(6i-3+4a_i).
\end{align*}
Then the sum of the corresponding parts of these three partitions is
\begin{align*}
\varpi=(8+8a_1, 24+8a_2, ..., 16i-8+8a_i)\in \cRR^{\II}_i,
\end{align*}
where $(a_1, a_2, ..., a_i)\in \cP^*_i$. It is obvious that this map
\begin{align*}
\cM^{\II}_i&\ri \cRR_i^{\II}\\
(\la, \mu, \nu)&\mapsto\varpi
\end{align*}
is a bijection, thus we have completed the proof of the identity \eqref{id:para_CY2}.
\end{proof}

\section{Conclusion}\label{sec:conclusion}

In this paper, we have completed the bijective proofs of some parameterized Rogers-Ramanujan type identities. By connecting with the previous work \cite{FL241, FL242, FL243}, it can be observed that the techniques used in these bijections may be applied to prove more similar identities. One of the directions we will consider next is the combinatorial interpretations and proofs of multi-sum Rogers-Ramanujan type identities, exploring whether these combinatorial frameworks and bijections have more general applications. For example, In the work of Wang et al. \cite{WW25} they established many interesting $q$-series identities. In our next paper, we will provide combinatorial interpretations of both sides of these multi-sum identities, as well as bijections of these identities. On the other hand, there may also be more space for development regarding the statistic ``sol'' in the subject.



\end{document}